\documentclass[12pt]{amsart}
\usepackage{amssymb,amsmath,amsfonts,epsfig,latexsym,texdraw}
\usepackage[all]{xy}
\setlength{\oddsidemargin}{0in} \setlength{\evensidemargin}{0in}
\setlength{\marginparwidth}{0in} \setlength{\marginparsep}{0in}
\setlength{\topmargin}{0in} \setlength{\headheight}{0pt}
\setlength{\headsep}{23pt} \setlength{\footskip}{.3in}
\setlength{\textwidth}{6.5in} \setlength{\textheight}{9in}
\setlength{\parskip}{4pt}

 \newtheorem{theorem}{Theorem}[section]
\newtheorem{definition}[theorem]{Definition}
\newtheorem{proposition}[theorem]{Proposition}
\newtheorem{lemma}[theorem]{Lemma}
\newtheorem{corollary}[theorem]{Corollary}

\theoremstyle{definition}

\def\N{\ensuremath{\mathbb{N}}}
\def\Z{\ensuremath{\mathbb{Z}}}
\def\Q{\ensuremath{\mathbb{Q}}}

\def\P{\ensuremath{\mathbb{P}}}

\def\F{\ensuremath{\mathbb{F}}}
\def\C{\ensuremath{\mathbb{C}}}
\def\A{\ensuremath{\mathbb{A}}}
\def\R{\ensuremath{\mathbb{R}}}
\def\L{\ensuremath{\mathbb{L}}}
\def\K{\ensuremath{\mathbb{K}}}

\def\O{\ensuremath{\mathcal{O}}}

\def\cu{\ensuremath{\mathcal{U}}}
\def\ca{\ensuremath{\mathcal{A}}}

\def\ccr{\ensuremath{\mathcal{R}}}
\def\cg{\ensuremath{\mathcal{G}}}
\def\BR{\ensuremath{\mathbf{R}}}
\def\cy{\ensuremath{\mathcal{Y}}}
\def\T{\ensuremath{\mathbb{T}}}

\def\cc{\ensuremath{\mathcal{C}}}
\def\cd{\ensuremath{\mathcal{D}}}
\def\cg{\ensuremath{\mathcal{G}}}

\def\cn{\ensuremath{\mathcal{N}}}
\def\cs{\ensuremath{\mathcal{S}}}

\def\Lhat{\widehat{\L}}

\def\Lbar{\overline{\L}}

\def\<{\ensuremath{\langle}}
\def\>{\ensuremath{\rangle}}

\DeclareMathOperator{\Max}{Max}

\DeclareMathOperator{\Hilb}{Hilb} \DeclareMathOperator{\Hom}{Hom}

\DeclareMathOperator{\Span}{Span}
\DeclareMathOperator{\Spec}{Spec}
\DeclareMathOperator{\Conv}{Conv}

\DeclareMathOperator{\Trop}{Trop}

\DeclareMathOperator{\Chow}{Chow}
\DeclareMathOperator{\an}{{an}}
\DeclareMathOperator{\FC}{FC}
\DeclareMathOperator{\sm}{sm}
\DeclareMathOperator{\spe}{sp}
\DeclareMathOperator{\UH}{UH}
\DeclareMathOperator{\Sch}{Sch}
\DeclareMathOperator{\Rig}{Rig}
\DeclareMathOperator{\Sets}{Sets}
\DeclareMathOperator{\Wt}{Wt}
\def\ol#1{{\overline{#1}}}
\def\init#1#2{{{\operatorname{in}}_{#1}(#2)}}

\def\excise#1{}
\begin{document}

\title{Tropical Realization Spaces for Polyhedral Complexes}

\author[Eric Katz]{Eric Katz}
\address{Department of Combinatorics \& Optimization, University of Waterloo, 200 University Avenue West, Waterloo, ON, Canada N2L 3G1} \email{eekatz@math.uwaterloo.ca}

\begin{abstract}Tropicalization is a procedure that assigns polyhedral complexes to algebraic subvarieties of a torus.  If one fixes a weighted polyhedral complex, one may study the set of all subvarieties of a toric variety that have that complex as their tropicalization.  This gives a  ``tropical realization'' moduli functor.  We use rigid analytic geometry and the combinatorics of Chow complexes as studied by Alex Fink to prove that when the ambient toric variety is quasiprojective, the moduli functor is represented by a rigid space.  As an application, we show that if a polyhedral complex is the tropicalization of a formal family of varieties then it is the tropicalization of an algebraic family of varieties.
\end{abstract}

\maketitle

\section{Introduction}

Let $\K$ be a field complete with respect to a non-trivial non-Archimedean absolute value $|\ |$ and with corresponding valuation, $v=-\log(|\cdot|)$. Let $\Gamma=v(\K^*)$ be the value group.  Given a $d$-dimensional subvariety $Y^\circ$ of the algebraic torus $(\K^*)^n$, tropicalization is a method to associate a weighted $\Gamma$-rational polyhedral complex $\Trop(Y^\circ)$ in $\R^n$.  Throughout the paper, we will use the technical frameworks of Gubler  \cite{GublerGuide} and Osserman-Payne \cite{OP} for the fundamentals of tropicalization and tropical intersection theory, respectively.  Gubler's paper works over general non-Archimedean valued fields while Osserman-Payne's paper requires that $\K$ be algebraically closed.  When we apply the results of Osserman-Payne, we will implicitly pass to the algebraic closure.  This does no harm as the tropicalization is unchanged by valued field extensions \cite[Prop 3.6]{GublerGuide}.
The reader is advised to refer to \cite{GublerGuide} for references to the original sources.   

By the Bieri-Groves theorem, $\Trop(Y^\circ)$ has a possibly non-canonical structure of a purely $d$-dimensional $\Gamma$-rational polyhedral complex \cite[Thm 10.14]{GublerGuide}.   This polyhedral complex is equipped with a {\em multiplicity} (or weight) function $m$ \cite[Sec. 13]{GublerGuide} that assigns a positive integer $m(\sigma)$ to every $d$-dimensional polyhedron $\sigma$.  This multiplicity function satisfies the balancing condition \cite[Sec. 13.9]{GublerGuide}.    

Given a purely $d$-dimensional weighted balanced $\Gamma$-rational polyhedral complex $\cd$ with multiplicity function $m:\cd_{(d)}\rightarrow\N$ on the $d$-dimensional polyhedra in $\cd$, we say that a variety $Y^\circ\subset (\K'^*)^n$ (where $\K'$ is a valued extension of $\K$) is a {\em tropical realization} of $(\cd,m)$ if $\Trop(Y^\circ)$ has underlying set $|\cd|$ and whose multiplicity function is a refinement of $m$.
We will study the set of all realizations of $(\cd,m)$ that are contained in a toric variety $X(\Delta)$.

\begin{definition} An integral closed subscheme $Y$ of $X(\Delta)$ is said to be a {\em tropical realization} of $(\cd,m)$ in $X(\Sigma)$ if $\Trop(Y^\circ)$ has tropicalization $(\cd,m)$ where $Y^\circ=Y\cap (\K^*)^n$.
\end{definition}

\begin{definition}
Let $S$ be a rigid  space.  A {\em family of tropical realizations} of $(\cd,m)$ in $X(\Delta)$ over $S$ is a rigid  subspace $Y$ of $X(\Delta)\times S$, flat and proper over $S$, such that for every $s\in S$, the fiber $Y_s$ is a tropical realization of $(\cd,m)$.
\end{definition}

As we will explain below, because $Y_s$ is proper, by rigid GAGA, it is the analytification of a subscheme of $X(\Delta)$.  We define the tropicalization of $Y_s$ to be the tropicalization of that subscheme.
Because flatness and properness are natural under base-change and tropicalization is unchanged under field extension, the pullback of a family of tropicalization realizations under $S'\rightarrow S$ is also a family of tropical realizations.  Therefore, there is a {\em tropical realization functor}
\[\ccr_{\Delta,\cd,m}:\Rig\rightarrow\Sets\]
taking $S$ to the set of families of tropical realization of $(\cd,m)$ over $S$ in $X(\Delta)$.

\begin{theorem}  \label{t:realspace} If $X(\Delta)$ is a quasiprojective toric variety, then $\ccr_{\Delta,\cd,m}$ is represented by an admissible open subset of the analytification of a scheme of finite type.\end{theorem}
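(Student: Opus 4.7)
The plan is to reduce to a Hilbert-scheme statement and then cut out the realization locus by initial-degeneration conditions. First I would reduce to the case that $X(\Delta)$ is projective. Since $X(\Delta)$ is quasiprojective, choose an equivariant completion $\Delta\subset\Delta'$ so that $X(\Delta')$ is projective and $X(\Delta)\hookrightarrow X(\Delta')$ is an open immersion. Every tropical realization $Y$ of $(\cd,m)$ in $X(\Delta)$ extends to its scheme-theoretic closure $\overline{Y}$ in $X(\Delta')$, which is again a tropical realization; conversely, a realization in $X(\Delta')$ lies in $X(\Delta)$ iff it is disjoint from the closed subset $X(\Delta')\setminus X(\Delta)$. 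Disjointness from a closed subset is an admissible open condition on any family, so the quasiprojective functor is an admissible open subfunctor of the projective one.

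Second, I would use the Hilbert scheme to parametrize candidates. By Fink's Chow complex correspondence, the weighted polyhedral complex $(\cd,m)$ determines a well-defined Chow class in $X(\Delta')$, and any tropical realization $Y$ must represent this class; in particular $Y$ has a Hilbert polynomial drawn from a finite list. Let $H$ be the union of the corresponding finitely many components of $\Hilb(X(\Delta'))$, which is a scheme of finite type, and let $H^{\an}$ be its rigid analytification. The universal family $\mathcal{Y}\to H$ analytifies to a proper flat family over $H^{\an}$, so representability reduces to identifying the subset $R\subset H^{\an}$ whose fibers tropicalize to $(\cd,m)$ and showing $R$ is admissibly open.

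Third, and this is the heart of the argument, I would characterize $R$ via initial degenerations. Once the Chow class is fixed, the balancing condition and Fink's correspondence force the multiplicities of $\Trop(Y)$ to agree with $m$ as soon as the support equality $|\Trop(Y)|=|\cd|$ holds, so it suffices to test support containment $\Trop(Y)\subseteq|\cd|$ and that $Y$ meets each torus orbit along $\cd$ with the expected initial degeneration. Only finitely many test weights $w$ are needed: cover $\R^n$ by a common refinement $\Sigma$ of $\cd$ with a Gr\"obner-type fan adapted to $H$, and for each maximal cell of $\Sigma$ not in $|\cd|$ impose that the initial degeneration of $Y$ at an interior point is empty (equivalently, the relevant initial ideal contains a unit). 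On affinoid charts of $H^{\an}$ each such emptiness condition is the locus where a specific coefficient in an affinoid Gr\"obner basis is a unit, which is an admissible open; intersecting the finitely many conditions and gluing over an admissible cover of $H^{\an}$ yields the admissible open $R$.

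The main obstacle I anticipate is not the finite reduction but checking that the resulting open locus is genuinely \emph{admissible} in the rigid sense. In non-Archimedean geometry one must exhibit $R$ as the union of an admissible family of affinoid subdomains, and openness of ``emptiness of an initial degeneration'' must be promoted from a pointwise statement to one controlled uniformly on affinoids; the technical input here is the behavior of initial degenerations in flat affinoid families together with Gubler's framework, which is precisely what lets the ``coefficient is a unit'' reformulation be carried out. Once that is in place, combining this description with the reduction from the quasiprojective to the projective case completes the proof of Theorem~\ref{t:realspace}.
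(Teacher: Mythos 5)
Your high-level skeleton — compactify, reduce to the Hilbert scheme, fix finitely many Hilbert polynomials, cut out the locus by finitely many initial-degeneration conditions, and verify admissibility on affinoid charts — matches the paper's. But the two steps you identify as "the heart of the argument" are where your proposal breaks down, and one of them is precisely the difficulty that the paper's use of the Chow complex is designed to overcome.

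First, you propose to obtain the finite set of test weights by covering $\R^n$ with ``a common refinement $\Sigma$ of $\cd$ with a Gr\"obner-type fan adapted to $H$.'' There is no such fan. The Gr\"obner complex is an invariant of a particular Hilbert point, and as you move over $H^{\an}$ it varies; moreover (as the paper remarks in Section 7) there may be \emph{infinitely many} possible Gr\"obner complexes among varieties with a fixed tropicalization $(\cd,m)$. So a single finite Gr\"obner-type test set that works uniformly across the Hilbert scheme does not exist. This is exactly why the paper replaces the Gr\"obner complex with the Chow complex of the Chow form $R_Y$: Fink's theorem (Proposition~\ref{p:fink}) says the Chow complex is \emph{determined} by $(\cd,m)$, hence is the same for every candidate $Y$, giving a canonical finite set of test points $w_\sigma$ (one per cell of the Chow complex) at which one checks the conditions of Proposition~\ref{p:chowcheck} and invokes Corollary~\ref{c:pointchowcheck}. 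Without this, your finiteness argument has a real gap.

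Second, you assert that ``the balancing condition and Fink's correspondence force the multiplicities of $\Trop(Y)$ to agree with $m$ as soon as the support equality $|\Trop(Y)|=|\cd|$ holds.'' This is unjustified and, I believe, false: Fink's theorem runs in the direction $(\cd,m)\mapsto\cc$, not the converse, and fixing the degree (or even the full Chow complex) does not pin down the multiplicity function on a given support, since different balanced weight functions on the same support can give realizations of the same degree. The paper does not treat multiplicity as automatic: condition~(\ref{i:mult}) of Proposition~\ref{p:chowcheck} is an explicit, nontrivial multiplicity check, expressed as membership of $\init{w}{R_Y}$ in the locally closed stratum $\Chow_{d,d',e'}(X(\Sigma))$ with $e'=m(\sigma)\deg(T_\sigma)$, and it is imposed at each test point. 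Relatedly, your support check is only one-sided ($\Trop(Y)\subseteq|\cd|$), whereas the paper needs the equivalence in condition~(\ref{i:supset}); without the reverse inclusion you could cut out realizations of proper weighted subcomplexes. In short: the structure of the argument is right, but the mechanism that makes the finite, uniform, admissible check possible — replacing Gr\"obner data by Chow data via Fink's theorem, together with an explicit multiplicity stratification of the Chow variety — is missing from your proposal.
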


Recall that a toric variety $X(\Delta)$ is quasiprojective if and only if $\Delta$ can be extended to a regular subdivision $\Sigma$ of $N_\R$ \cite[Sec 3.4]{Fulton}. 

Our work is phrased in the language of rigid analytic geometry because fixing the 
tropicalization of a variety $Y\subset (\K^*)^n$ imposes conditions on the central fiber of a model $\cy$ of $Y$ over the valuation ring $\K^\circ$ of $\K$.  Such conditions are not 
algebraic but are rigid analytic instead. For the quasiprojective case, these conditions cut out an open set (called an {\em admissible open}) in a natural $G$-topology on the analytifications of the Hilbert scheme.  This allows us to  apply rigid  techniques to study tropical realizations.

Moreover, one may want to identify the locus of subvarieties of $X(\Delta)$ that satisfy a natural smoothness condition called sch\"{o}nness that was introduced in \cite{TevComp}.  While we do not know  if the points parameterizing sch\"{o}n subvarieties form an admissible open, we are able to show the following weaker result:

 \begin{theorem} \label{t:schon} Every  proper sch\"{o}n subvariety $Y\subset  X(\Delta)$ has an admissible open neighborhood in $\BR_{\Delta,\cd,m}$ consisting of proper sch\"{o}n subvarieties.   \end{theorem}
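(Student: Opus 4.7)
The plan is to reduce schönness to the smoothness of finitely many initial degenerations that vary rigid-analytically over the realization space, and then to invoke the openness of smoothness in flat families. Recall that $Y$ is schön if and only if every initial degeneration $\init{w}{Y^\circ}$, for $w \in |\cd|$, is smooth, and that $\init{w}{Y_s^\circ}$ depends on $w$ only through the relative interior of the polyhedron (in a Gröbner refinement of $\cd$) containing it. Consequently, it suffices to verify smoothness at a finite collection $\{w_\sigma\}$, one representative in the relative interior of each polyhedron $\sigma$ of a fixed refinement $\cd'$ of $\cd$; such a $\cd'$ compatible with the Gröbner structures of all fibers over a small affinoid neighborhood of $[Y]$ exists because properness of $Y$ bounds the combinatorics and because the Gröbner subdivision of $\cd$ varies in a controlled way on an admissible affinoid neighborhood.

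For each $w = w_\sigma$, the next step is to construct a rigid analytic family of initial degenerations at $w$ as follows. Pick an affinoid neighborhood $S \subset \BR_{\Delta,\cd,m}$ of $[Y]$ over which the universal family $\cy \to S$ admits an admissible formal model $\hat{\cy} \to \hat{S}$. The rigid tube $\Trop^{-1}(\{w\})$ has a natural formal avatar inside $\hat{\cy}$; restricting to this avatar and passing to the special fiber produces a family $\ol{\cy}_w \to \ol{\hat{S}}$ over the reduction of the base, whose fiber over the image of $[Y]$ is $\init{w}{Y^\circ}$ by Gubler's description of initial degenerations via formal reduction. By hypothesis this fiber is smooth.

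Since smoothness is open in flat families of finite type, there is a Zariski open $V_\sigma \subset \ol{\hat{S}}$ containing the reduction of $[Y]$ over which $\ol{\cy}_w$ has smooth fibers. Its preimage under the reduction map $\hat{S} \to \ol{\hat{S}}$ is an admissible open $U_\sigma \subset S$ whose points parametrize realizations with smooth initial degeneration at $w_\sigma$. The finite intersection $U = \bigcap_\sigma U_\sigma$ is the required admissible open neighborhood of $[Y]$; properness of each fiber $Y_s$ is automatic from the definition of a family of tropical realizations.

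The principal obstacle will be the second step: the rigid-analytic construction of the family of initial degenerations and the verification that its fibers agree pointwise with the tropical initial degenerations of the fibers $Y_s^\circ$. This demands a careful matching of the formal model of $\cy \to S$, the formal tube at $w$, and Gubler's formal-geometric interpretation of initial degenerations, integrated with the Chow-complex machinery used in the proof of Theorem \ref{t:realspace}. A secondary, combinatorial difficulty is the finite-refinement claim above: one must check that on a sufficiently small affinoid neighborhood of $[Y]$, the Gröbner subdivisions of the nearby fibers admit a common refinement $\cd'$ with only finitely many polyhedra relevant for schönness.
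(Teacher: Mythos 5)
Your global strategy---check smoothness of $\init{w_\sigma}{Y'^\circ}$ at finitely many representatives $w_\sigma$ and then open up---is the right idea, but both steps you flag as obstacles are genuine gaps, and the paper's proof is organized precisely to avoid them. The first gap is the ``common Gr\"{o}bner refinement $\cd'$'': nearby points of $\BR_{\Delta,\cd,m}$ may have genuinely different Gr\"{o}bner complexes (unlike the Chow complex, the Gr\"{o}bner complex is \emph{not} determined by the tropicalization), and properness plus a vague ``controlled variation'' claim does not produce a uniform refinement on an arbitrary affinoid neighborhood. The paper avoids the issue entirely: it first restricts to the admissible open $U_\cg$ consisting of realizations whose Gr\"{o}bner complex is exactly $\cg$, the Gr\"{o}bner complex of $Y$, proved admissible by a weight-complex argument analogous to Lemmas \ref{l:wtcomplex} and \ref{l:hilbpoints}. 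Within $U_\cg$ the same finite list $\{w_\sigma\}$ of interior representatives of cells of $\cg$ works for every fiber, and Lemma \ref{l:smoothcheck} then gives sch\"{o}nness.

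The second gap is the rigid-analytic ``family of initial degenerations at $w$'' that you propose to build from a formal model and a formal tube; you acknowledge this is the principal obstacle, and indeed it is not constructed anywhere in the paper, nor is it needed. Instead, for fixed $w$ the paper uses the elementary identity $\init{\Sigma,w}{Y'}=\spe(t^{-1}\cdot[Y'])$ (after a valued extension with $v(t)=w$), so smoothness at $w$ is a condition on the image of $[Y']$ under a specialization map $\Hilb(X(\Sigma))^{\an}\to\Hilb(X(\Sigma))_{\tilde\K}$. Lemma \ref{l:smoothininterior} exhibits the ``smooth away from $\partial X(\Sigma)$'' locus in $\Hilb_Q(X(\Sigma))_{\tilde\K}$ as a finite union $L$ of locally closed subschemes (via a flattening stratification comparing the non-smooth locus of the universal family with its toric-boundary part), and Lemma \ref{l:inverse special} gives that $\spe^{-1}(L)$, hence $S_{w_\sigma}$, is admissible. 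The desired neighborhood is $U_\cg\cap\bigcap_\sigma S_{w_\sigma}$, and no formal model of the universal family nor any ``openness of smoothness in flat families'' over a reduced base is invoked. If you want to complete your route, you must both prove the common-refinement claim (which is false without restricting to $U_\cg$) and verify that the fibers of your formal tube construction agree with Gubler's initial degenerations after arbitrary base change in the Hilbert scheme; the paper's combination of $U_\cg$ with the specialization map is a cleaner and more economical substitute for both.
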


Our main application of Theorems \ref{t:realspace} and \ref{t:schon} is in the study of tropical realizations.  One may be interested in realizing a complex $(\cd,m)$ by an algebraic family of varieties instead of a variety over a general non-Archimedean field.  We have the following result in this direction:

\begin{theorem} \label{t:approxprinciple} Let $\L$ be a field with non-Archimedean absolute value.  Let $\hat{\L}$ denote its completion with respect to the valuation.  If a weighted rational polyhedral complex $(\cd,m)$ has a tropical realization by an integral subscheme $Y^\circ\subset (\Lhat'^*)^n$ (where $\Lhat'$ is an extension of $\Lhat$), then it has a realization by an integral subscheme ${Y'}^\circ$ defined over the algebraic closure $\ol{\L}$.  Moreover, if $Y^\circ$ is sch\"on then ${Y'}^\circ$ may be chosen sch\"on.
\end{theorem}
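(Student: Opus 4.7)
The plan is to convert the existence of $Y^\circ$ over $\Lhat'$ into a point of the rigid space representing $\ccr_{\Delta,\cd,m}$, and then use the density of algebraic points (via Krasner's lemma) to locate a nearby point defined over $\ol{\L}$.

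First, I choose a regular subdivision $\Delta$ of $N_\R$ containing the recession fan of $\cd$ as a subfan, so that $X(\Delta)$ is quasiprojective and the closure $Y$ of $Y^\circ$ in $X(\Delta)_{\Lhat'}$ is proper; this $Y$ is a tropical realization of $(\cd,m)$. By Theorem~\ref{t:realspace}, $\ccr_{\Delta,\cd,m}$ is represented by an admissible open $U \subset X^{\an}$ of a finite-type $\Lhat$-scheme $X$. Because the construction realizes $X$ inside a Hilbert scheme of $X(\Delta)$ (which is defined over $\Z$), $X$ descends to a scheme $X_\L$ over $\L$ with $X = X_\L \otimes_\L \Lhat$.

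Second, since $Y$ witnesses that $U$ is nonempty, $U$ contains a classical rigid point $\xi_0$, corresponding to a closed point of $X$ with residue field $\Lhat_0$, a finite extension of $\Lhat$. By Krasner's lemma applied to the density of $\L$ in $\Lhat$, the algebraic closure $\ol{\L}$ embeds densely in an algebraic closure of $\Lhat$; hence the $\ol{\L}$-points of $X_\L$ are dense among the classical points of $X^{\an}$. Working in an affinoid neighborhood of $\xi_0$ inside $U$, I can therefore find a $\ol{\L}$-point $\xi$ of $X_\L$ whose associated rigid point still lies in $U$. The corresponding subscheme $Y' \subset X(\Delta)_{\ol{\L}}$ is a proper tropical realization of $(\cd,m)$, and ${Y'}^\circ := Y' \cap (\ol{\L}^*)^n$ is the desired realization.

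For the sch\"on case, I would additionally arrange $\Delta$ so that $Y$ itself is proper sch\"on (since sch\"onness of $Y^\circ$ is intrinsic, a suitable refinement of $\Delta$ produces a sch\"on compactification). Theorem~\ref{t:schon} then provides an admissible open neighborhood $V \subset U$ of $Y$ consisting of proper sch\"on subvarieties, and running the density argument of the previous paragraph inside $V$ yields a sch\"on realization over $\ol{\L}$. The main obstacle I anticipate is the density claim in the second paragraph: upgrading Krasner's lemma from a density statement about field elements to density in the $G$-topology of $U$ requires a careful local argument on affinoid charts of $X^{\an}$, verifying that small algebraic perturbations of the coordinates of $\xi_0$ remain inside the admissible open.
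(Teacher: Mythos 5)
Your overall strategy is the same as the paper's: realize the hypothesis as the statement that the representing space $\BR_{\Delta,\cd,m}$ is a non-empty admissible open inside the analytification of a scheme defined over $\L$ (a union of components of the Hilbert scheme of $X(\Delta)$), and then produce an $\ol{\L}$-point of that admissible open. The first paragraph and the sch\"on reduction via Theorem~\ref{t:schon} match the paper's proof. The problem is the step you yourself flag: the passage from ``$\ol{\L}$ is dense in an algebraic closure of $\Lhat$'' (which Krasner's lemma does give) to ``every non-empty admissible open $U\subset X^{\an}$ contains an $\ol{\L}$-point of $X_\L$.'' This is a genuine gap, not a routine verification on affinoid charts. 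If you embed an affine piece of $X_\L$ in $\A^n$ and perturb the coordinates of $\xi_0$ into $\ol{\L}$, the perturbed point will in general no longer satisfy the equations cutting out $X_\L$; Zariski density of $\ol{\L}$-points on a variety over $\L$ does not give density in the $G$-topology, and Krasner alone supplies no mechanism for landing back on the variety.

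The paper closes exactly this gap with Lemma~\ref{l:approx} (attributed to Helm), and the mechanism is a finite surjective projection rather than Krasner: after shrinking, one produces a finite surjective morphism $p:\overline{X}\to\P^d$ by Noether normalization / coordinate projections, arranges that $p$ maps $U$ into a unit polydisc, and uses the fact that the image of a rational open under a finite surjective map of affinoids is a finite union of rational opens \cite[Prop 8.1.2(2)]{FvdP}. A rational open in the polydisc visibly contains an $\ol{\L}$-point (here one only needs density of $\ol{\L}$, or even $\L$, among field elements), and the fiber of $p$ over an $\ol{\L}$-point is a finite scheme over $\ol{\L}$, so all its points are $\ol{\L}$-points; a preimage lying in $U$ is the desired point. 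This is the ``parametrize the free coordinates algebraically and solve for the rest'' step that your sketch is missing. To complete your proof you should either prove this density lemma (the finite-projection argument is the natural route) or cite it; as written, the second paragraph asserts the conclusion of the lemma without an argument that would survive for a general subvariety $X_\L$ of affine space.
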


This result  follows from the fact that $\BR_{\Delta,\cd,m}$ is an admissible open and from a density argument in rigid geometry.  By setting $\L=\C(t)$ with valuation induced by the ideal $(t)$, we get that a complex $(\cd,m)$ realizable over $\overline{\C((t))}$ is realizable over some finite extension of $\C(t)$.  This shows that formal realizability results also prove algebraic realizability results.  

In the case where $\Delta$ is not quasiprojective, the Hilbert functor of $X(\Delta)$ is represented by an algebraic space, not a scheme.  Its analytification can be studied by techniques of Conrad and Temkin \cite{CTAlgSpace}.  Unfortunately, our arguments which require projectivity do not extend to this case.

The question of realization spaces was studied in the case of varieties over fields with trivial valuations in a paper with Sam Payne \cite{KPReal}.   In that paper, one fixed a simplicial purely $d$-dimensional fan $\Delta$ with a multiplicity function $m$ and defined a tropical realization moduli functor $\BR_{\Delta,m}:\Sch\rightarrow\Sets$.  The moduli functor was shown to be representable by an algebraic space in general and by a scheme of finite type when $X(\Delta)$ is quasiprojective.  The arguments in this paper differ from those in the paper with Payne in that we make very heavy use of the Chow variety to impose conditions on initial degenerations.   In particular, we use a polyhedral complex called the {\em Chow complex} which is a natural but combinatorially more tractable cycle-theoretic analogue of the Gr\"{o}bner complex.   Theorem \ref{t:realspace} rests on a result  proved by Fink in \cite{F} that the Chow complex of a subscheme is determined by its tropicalization. This is used to show that we only need to check finitely many initial degenerations to determine the tropicalization and thus gives us an admissible open condition.

We do not study the existence of tropical realizations here.  The existence of realizations is addressed in upcoming work of Brugall\'{e}-Mikhalkin, papers of Brugall\'{e}-Shaw \cite{BruShaw}, Nishinou \cite{N}, Speyer \cite{Spe07}, Tyomkin \cite{Tyomkin}, with Payne \cite{KPReal} and Bogart \cite{BKLiftingHyper}, and in \cite{KLiftingSpace}.

Most of this paper is taken up with the construction of the realization space.  Sections 2 and 3 give background material in tropical and rigid geometry, respectively.  Sections 4 reviews weight complexes and shows their connection to rigid geometry.  Section 5 provides background about Chow varieties and Chow complexes.  The construction of the realization space as a subspace of the Hilbert scheme is in section 6.  The proof of Theorem \ref{t:schon} is in section 7.  Section 8 establishes Theorems \ref{t:approxprinciple}.

We would like to thank David Helm for explaining the proof of Lemma \ref{l:approx}.
We would also like to thank Alex Fink, Walter Gubler, Sam Payne, Joe Rabinoff, Alan Stapledon, and Ravi Vakil for valuable discussions. 
 
\section{Tropical Geometry}
We review some basic notions of tropical geometry following \cite{GublerGuide}.  For other approaches, one may also consult \cite{FirstSteps, TropicalBook, KTT}.

Let $\K$ be a non-Archimedean valued field with valuation ring $\K^\circ$ and residue field 
$\tilde{\K}$.  In this section, we do not require that $\K$ be complete.  When we pass to an extension of $\K$, we will also use $v$ to denote the extension of 
the valuation.  Let $M$ be a free abelian group of rank $n$ and $N=\Hom(M,\Z)$ be the dual group.  
Let $T=\Spec \K[M]$ be the algebraic torus with character group $M$ and let 
$\T=\Spec \K^\circ[M]$ be the split torus over $\K^\circ$.  Let $M_\R=M\otimes\R$, 
$N_\R=N\otimes\R$.  For $\Delta$, a rational fan in $N_\R$, let $X(\Delta)$ be the associated toric variety compactifying $T$.  Let $v:T\rightarrow N_\R$ be given by
\[t\mapsto (u\mapsto v(\chi^u(t)))\]
where $\chi^u$ is the character associated to $u\in M$.  Once we pick coordinates, we may think of this map as $v:(\K)^*\rightarrow \R^n$, the Cartesian product of valuations.

Given an integral closed $d$-dimensional subscheme $Y^\circ\subset (\K^*)^n$ and $w\in N_\R$, we may define  the {\em initial degeneration} \cite[Sec. 5]{GublerGuide} of $Y^\circ$, $\init{w}{Y^\circ}.$ Let $\K'$ be a valued extension of $\K$ with $w=v(t)$ for some $t\in T_{\K'}$.  Then $\init{w}{Y^\circ}$ is the special fiber of the closure of $t^{-1}Y^\circ_{\K'}$ in the torus $T_{\K'^\circ}$.  It is a closed subscheme of $T_{\tilde{\K}'}$ and is defined only up to translation by an element of $T_{\tilde{\K}'}$ and up to field extension of $\K'$ \cite[Prop 5.5]{GublerGuide}.  

The tropicalization $\Trop(Y^\circ)$ of $Y^\circ$ is a subset of $N_\R$.  It is defined in \cite[Sec. 3]{GublerGuide} as the image of the Berkovich analytification of $Y^\circ$ over the completion of $\K$ under the Cartesian product of valuations.  However, in this paper we will work with the characterization of $\Trop(Y^\circ)$ given by the fundamental theorem of tropical geometry \cite[Thm 5.6]{GublerGuide}: $\Trop(Y^\circ)$ is the equal to the set $\{w\in N_\R|\init{w}{Y^\circ}\neq\emptyset\}$.  The set $\Trop(Y^\circ)$ has the structure of a purely $d$-dimensional $\Gamma$-rational polyhedral complex.  

We refer to \cite[Appendix]{GublerGuide} for facts about polyhedral complexes although our notation is different.  A purely $d$-dimensional polyhedral complex $\cd$ is said to be {\em weighted} if there is a multiplicty function $m:\cd_{(d)}\rightarrow\N$ assigning a positive integer to every top-dimensional polyhedron.  For a polyhedron $\sigma$, we denote the relative interior of $\sigma$ by $\sigma^\circ$.  For a polyhedral complex $\cd$, we use $|\cd|$ to denote the union of the underlying polyhedra of $\cd$.  We say a complex $\cd$ is supported on a complex $\cc$ if $|\cd|\subset|\cc|$.
A complex $\cd'$ is said to be a refinement of a complex $\cd$ if $|\cd'|=|\cd|$ and every polyhedron of $\cd'$ is contained in a polyhedron of $\cd$.

There is a polyhedral complex structure on $\Trop(Y^\circ)$ that is induced from the Gr\"{o}bner complex \cite[Sec. 10]{GublerGuide}.  It has the following property: for $\sigma$, a cell of $\Trop(Y^\circ)$, let $T_\sigma\subset T$ be the subtorus whose cocharacter space is the span of $\sigma-w$ for $w\in\sigma^\circ$; for $w\in\sigma^\circ$, $\init{w}{Y^\circ}$ is invariant under multiplication by $T_\sigma$.  
This polyhedral complex is equipped with a {\em multiplicity} (or weight) function $m:\Trop(Y)_{(d)}\rightarrow\N$ \cite[Sec. 13]{GublerGuide}.  This multiplicity $m(\sigma)$ for a $d$-dimensional polyhedron $\sigma$ is defined as the sum of the multiplicities of $\init{w}{Y^\circ}$ over its irreducible components over an algebraically closed field for some $w\in\sigma^\circ$. This multiplicity function is independent of the choice of $w\in\sigma^\circ$ and 
satisfies the balancing condition \cite[Sec. 13.9]{GublerGuide}.   Consequently, the multiplicity function is locally constant on the regular points of $\Trop(Y)$, that is, the points $w$ for which there is a polytope $\sigma\subset\Trop(Y)$ such that $\sigma^\circ$ is a neighborhood of $w$ in $\Trop(Y^\circ)$.  Because the polyhedral structure on $\Trop(Y^\circ)$ is not unique, we will work with polyhedral complexes on $\Trop(Y^\circ)$ up to refinement.  The multiplicity function on a refinement is the one induced from that of the coarser subdivision.

We will make use of integral subschemes $Y$ of a toric variety $X(\Delta)$.  For such a subscheme, we will write $Y^\circ$ for $Y\cap T$.   We define initial degenerations: for $w\in N_\R$, pick a valued extension $\K'$ of $\K$ and $t\in T_{\K'}$ with $v(t)=w$; then 
$\init{\Delta,w}{Y}$ is the special fiber of the closure of $t^{-1}Y_{\K'}$ in $X(\Delta)_{\K'^\circ}$.
It will not, in general, 
be true that $\init{\Delta,w}{Y}$ is equal to $\overline{\init{w}{Y^\circ}}$.  This is because the first 
subscheme may have components supported on the toric boundary of $X(\Delta)$.  It is true that 
$\init{w}{Y^\circ}=\init{\Delta,w}{\overline{Y}}\cap T_{\tilde{\K}}$ \cite[Sec 10.11]{GublerGuide}.

There is a natural notion of tropical smoothness for varieties $Y^\circ\subset T$ introduced by Tevelev \cite{TevComp} called {\em sch\"{o}nness}.  We will use the following characterization:
$Y^\circ$ is sch\"{o}n if and only if $\init{w}{Y^\circ}$ is smooth for all $w\in\Trop(Y^\circ)$.  See \cite[Prop 3.8]{HK} for a proof.

We also define initial degenerations of points in projective space.  Suppose $\T$ acts linearly on 
some projective space $\P^{N'}_{\K^\circ}$.  For $w\in N_\R$ pick a valued field extension $\K'$ of $\K$ and $t\in T_{\K'}$ satisying $v(t)=w$, then for any $\mathbf{x}\in\P^{N'}(\K')$ define $\init{w}{\mathbf{x}}$ to be the
intersection  of $\overline{t^{-1}\cdot\mathbf{x}}\subset\P^{N'}_{\K'^\circ}$  with the special fiber, 
$\P^{N'}_{\tilde{\K}'}$.  This initial degeneration is well-defined only up to $T_{\tilde{\K}'}$-action.

\section{Rigid Analytic Geometry}

\subsection{Basic Notions}
We introduce some basic notions from rigid analytic geometry.   As references we recommend \cite{BGR,Conrad,FvdP,Schneider}.  

Let $A$ be an affinoid algebra and $\Max A$ be its maximal spectrum.  
A rational subset $U\subset \Max A$ is one defined by
\[U=\{x|\ |f_i(x)|\leq |g(x)| \text{ for } i=1,\dots,s\}\]
where $g,f_1,\dots,f_s\in A$ generate the unit ideal.  A set of the form 
\[\{x\in \Max A|\ |f_i(x)|\leq 1,\ |g_i(x)|\geq 1\}\]
for $f_1,\dots,f_r,g_1,\dots,g_s\in A$ is called a {\em Laurent domain}.

$\Max A$  carries a natural G-topology (which is a Grothendieck topology whose open sets are particular subsets of $M(A)$ under inclusion).  The open sets and covers are called {\em admissible sets} and {\em admissible covers}.  We will not use admissible sets in any deep way and only need the following observations:
\begin{enumerate}
\item Zariski open sets are admissible,

\item Finite unions of rational opens and Laurent domains are admissible,

\item  Finite intersection of admissible sets are admissible,

\item A subset defined by $\{x\ \big|\ |f(x)|<b\}$ is admissible.
\end{enumerate}

A rigid space over $\K$ is a locally ringed G-topologized space $(X,\O_X)$ that is in a particular sense, locally modeled on affinoid spaces with their structure sheaves. 

There is an {\em analytification} functor from the category of separated schemes of finite type over $\K$ to rigid spaces \cite[Ex 4.3]{FvdP}.  \excise{We first explain the analytification of projective space.  Let $[Z_0:\dots:Z_n]$ be homogeneous coordinates on $\P^n$.  We consider the standard open $U_i=\Max \K\<\frac{Z_0}{Z_i},\frac{Z_1}{Z_i},\dots,\frac{Z_n}{Z_i}\>$.  Note that the underlying set of $U_i$ consists of the polydisc $B^n$.  In other words, we have 
\[\left|\frac{Z_0}{Z_i}\right|,\dots,\left|\frac{Z_n}{Z_i}\right|\leq 1.\]
so the standard open $U_i$ corresponds to points in $\P^n$ with the maximum of $|
Z_k|$ achieved by $|Z_i|$.}  If $X\subset\P^n$ is a projective variety, then one can take the
analytification $X^{\an}$ by analytifying neighborhoods $V_i$  in standard affine charts to $V_i^{\an}$ and gluing.
Let $\pi\in\K$ with $|\pi|<1$. For $V_i=\Spec\left(\K[x_1,\dots,x_n]/(f_1,\dots,f_k)\right)$, one defines
$V_{i.l}^{\an}=\Max \K\<\pi^lx_1,\dots,\pi^lx_n\>/(f_1,\dots,f_k)$
to be the intersection of the analytification with the polydisc of radius $|\pi|^{-l}$.  If $V_{i,l}$ is any affinoid chart of $X^{\an}$, then any rational subset of $V_{i,l}$ is an admissible set.

 If $X\subset\P^n$ is a projective flat scheme over $\O$ (in particular if $X$ is base-changed from a projective scheme over $\Z$), there is a specialization map $\spe:X^{\an}\rightarrow X_{\tilde{\K}}$  \cite[Sec. 4.8]{FvdP}.  For a $\K$-point of $X$, $\spe(x)$ is the intersection of $\overline{x}$ with the closed fiber $X_{\tilde{\K}}$.  
 
If $X=\P^{N'}$ is a projective space with a linear $T$-action induced from an action of $\T$ on $\P^{N'}_{\K^\circ}$, then specialization is closely related to initial degenerations: for $w\in N_\R$, pick a valued field extension $\K'$ of $\K$ and $t\in T_{\K'}$ with $v(t)=w$; then for $\mathbf{x}\in\P^n(K')$, $\init{w}{\mathbf{x}}=\spe(t^{-1}\cdot \mathbf{x})$.  
 
The following lemma is standard (combine, say, \cite[Prop 0.2.3]{Berthelot},\cite[Sec. 7.7]{FvdP}) and is proved by looking at affinoid charts and considering lifts of the defining equations of a subset.

\begin{lemma} \label{l:inverse special} Let $X\subset\P^n$ be a projective flat scheme over $\K^\circ$.  If $Y$ is a Zariski locally closed subset of $X_{\tilde{\K}}$, then $\spe^{-1}(Y)$ is an admissible open in $X^{\an}$. 
\end{lemma}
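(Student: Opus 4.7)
The plan is to reduce the locally closed case to the open and closed cases separately, and then verify admissibility on an explicit finite affinoid cover of $X^{\an}$ by reading off defining equations through the specialization map. Any Zariski locally closed subset $Y\subset X_{\tilde{\K}}$ can be written $Y=U\cap Z$ with $U$ Zariski open and $Z$ Zariski closed, so $\spe^{-1}(Y)=\spe^{-1}(U)\cap\spe^{-1}(Z)$; observation~(3) then reduces the problem to the two cases where $Y$ is Zariski closed and where $Y$ is the complement of a Zariski closed set. Because $X$ is projective and flat over $\K^\circ$, I would cover $X$ by finitely many standard $\K^\circ$-affine opens; their generic fibers analytify to affinoids $V_i=\Max A_i$ that together form an admissible cover of $X^{\an}$, with the property that the canonical reduction $\tilde{A}_i$ of $A_i$ is the coordinate ring of the corresponding affine open of the special fiber $X_{\tilde{\K}}$. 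Since admissibility is local for the $G$-topology, it suffices to verify that $\spe^{-1}(Y)\cap V_i$ is admissible in each chart; on $V_i$ the specialization of a point $x$ is the point of $\Spec\tilde{A}_i$ whose maximal ideal is generated by the reductions of those $f$ with $|f(x)|<1$.

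Fix a chart $V_i$ and write $Z\cap\Spec\tilde{A}_i=V(\bar f_1,\dots,\bar f_k)$, lifting the $\bar f_j$ to $f_1,\dots,f_k\in A_i$ of norm at most one. Then
\[
\spe^{-1}(Z)\cap V_i=\bigcap_{j=1}^k\{x\in V_i : |f_j(x)|<1\},
\]
which is admissible by observations~(4) and~(3); and for $U=X_{\tilde{\K}}\setminus Z$,
\[
\spe^{-1}(U)\cap V_i=\bigcup_{j=1}^k\{x\in V_i : |f_j(x)|\geq 1\}
\]
(using $|f_j|\leq 1$ on $V_i$), a finite union of Laurent domains, admissible by observation~(2). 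Assembling these pieces over the finite cover yields the lemma.

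The step I expect to be the main obstacle is not the admissibility bookkeeping, which is mechanical once the defining inequalities are in place, but rather justifying the concrete point-set picture of the specialization map: one must use flatness of $X$ over $\K^\circ$ to ensure that the chosen affine opens give affinoids whose canonical reductions recover the special fiber, and that defining equations of any closed subscheme of $X_{\tilde{\K}}$ admit lifts of norm at most one in the affinoid algebras $A_i$. These facts are standard for projective flat $\K^\circ$-models, but they are precisely where the flatness hypothesis enters, and they are the substantive content of the argument.
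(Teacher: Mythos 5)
Your argument is correct and is exactly the route the paper has in mind: it cites the result as standard and sketches the proof as ``looking at affinoid charts and considering lifts of the defining equations,'' which is precisely your reduction to the closed and open cases on the affinoid tubes $\spe^{-1}(\Spec\tilde{A}_i)$, with $\spe^{-1}(Z)$ cut out by $|f_j|<1$ and $\spe^{-1}(U)$ by a finite union of Laurent domains $\{|f_j|=1\}$. You also correctly isolate the only substantive input, namely that flatness of the projective model guarantees the affinoid charts reduce onto the affine opens of $X_{\tilde{\K}}$ and that defining equations lift with norm at most one.
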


\excise{This can be shown by considering the covering of $X$ by standard opens in $\P^{N'}$.  If $f_1,\dots,f_k\in\K\<x_1,\dots,x_n\>$ cut out a closed subscheme $V$ in $X_k\cap U_i$, then $\spe^{-1}(V)\cap U_i$ is given by $|f_i|,\dots,|f_n|<1$ which is admissible.  Likewise, if $O=X_k\setminus V$, $\spe^{-1}(O)$ is cut out by $|f_1|=\dots=|f_n|=1$ which defines a rational open in the analytification of an affine open in $X$, hence an admissible open in $X^{\an}$.  }

Because locally closed subsets play an important role here, we will find the following simple topological lemma useful:

\begin{lemma} \label{l:locallyclosed} Let $f:X\rightarrow Y$ be a proper continuous map of topological spaces.  Suppose $V\subset X$ is locally closed and satisfies $f^{-1}(f(V))=V$.  Then $f(V)$ is locally closed.
\end{lemma}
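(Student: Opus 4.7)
The plan is to exhibit $f(V)$ as the difference of two closed sets in $Y$. Since $V$ is locally closed, it is open in its closure $\overline{V}$, so the set $Z := \overline{V}\setminus V$ is closed in $\overline{V}$, and therefore closed in $X$. A proper continuous map is in particular a closed map, so both $f(\overline{V})$ and $f(Z)$ are closed subsets of $Y$.

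The heart of the argument is the identity
\[
f(V) \;=\; f(\overline{V}) \,\setminus\, f(Z),
\]
which immediately writes $f(V)$ as the closed set $f(\overline{V})$ intersected with the open set $Y\setminus f(Z)$, exhibiting it as locally closed. The inclusion $f(V)\subset f(\overline{V})\setminus f(Z)$ is where the fiber-saturation hypothesis enters: if we had $y=f(v)$ with $v\in V$ and simultaneously $y=f(z)$ with $z\in Z$, then $z\in f^{-1}(f(V))=V$, contradicting $z\in \overline{V}\setminus V$. Conversely, given $y\in f(\overline{V})\setminus f(Z)$, pick any $x\in\overline{V}$ with $f(x)=y$; if $x\notin V$ then $x\in Z$ and $y\in f(Z)$, which is excluded, so $x\in V$ and $y\in f(V)$.

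The main obstacle is really just spotting the correct decomposition; once one writes $\overline{V}=V\sqcup Z$ with $Z$ closed in $X$, the argument is a formal consequence of $f$ being closed together with the saturation hypothesis. It is worth noting that the condition $f^{-1}(f(V))=V$ is essential: without it, one has only the inclusion $f(V)\subset f(\overline{V})\setminus f(Z)$ and the reverse can fail, so images of locally closed sets under proper maps need not be locally closed in general. In the intended application, this hypothesis will be arranged by taking $V$ to be saturated with respect to a torus action or Chow-stratum structure whose quotients are represented by $f$.
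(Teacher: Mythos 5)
Your proof is correct and follows essentially the same route as the paper's: write $V$ as a difference of closed sets, show $f$ carries this to a difference of closed sets using the saturation hypothesis, and conclude from $f$ being a closed map. The paper states the key identity $f(Z_1\setminus Z_2)=f(Z_1)\setminus f(Z_2)$ without comment, whereas you correctly isolate the one inclusion that actually needs $f^{-1}(f(V))=V$; your write-up is just a more detailed version of the same argument.
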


\begin{proof}
Write $V=Z_1\setminus Z_2$ for $Z_1,Z_2$ closed.  One observes $f(Z_1\setminus Z_2)=f(Z_1)\setminus f(Z_2)$ and so is locally closed.
\end{proof}

\subsection{Tropicalization of rigid subspaces}

We will define the tropicalization of proper rigid subspaces $Y$ of toric varieties $X(\Delta)$.  While the tropicalization of rigid subspaces was introduced by Gubler \cite{Gubler}, a theory of multiplicities on the tropicalization has not been developed.  For our situation, we will restrict ourselves to proper subspaces of toric varieties.  In this case, we can apply rigid GAGA, and then tropicalize the relevant subscheme.  For the proof of rigid GAGA, one establishes the correspondence between rigid and algebraic coherent sheaves \cite[Sec. 7.4]{Abbes}, and then applies the proof of the existence theorem for subschemes given for formal schemes  \cite[$\text{III}_1$, 5.1-5.2]{EGA}  to the rigid setting.  

We will make use of the following:
\begin{lemma} (compare \cite[$\text{III}_1$,\ Cor (5.1.8)]{EGA}) Let $X$ be a separated scheme of finite type over an affine scheme $S$.  Analytification induces a bijection between the set of closed subschemes, proper over $S$ of $X$ and the set of closed rigid subspaces of $X^{\an}$, proper over $S$.
\end{lemma}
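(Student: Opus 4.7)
The plan is to reduce the statement to the rigid GAGA theorem for proper $S$-schemes, which is exactly what the paragraph preceding the lemma sets up (sheaf correspondence via Abbes combined with the existence theorem for subschemes transplanted from EGA $\text{III}_1$, 5.1--5.2). Injectivity of analytification on closed subschemes then drops out from the faithfulness of analytification on coherent ideal sheaves: if $Z_1, Z_2 \subset X$ are proper $S$-subschemes with $Z_1^{\an} = Z_2^{\an}$, then their ideal sheaves in $\mathcal{O}_X$ agree after analytification, which can be checked on any affinoid cover of $X^{\an}$.

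For surjectivity, let $Y \subset X^{\an}$ be a closed rigid subspace proper over $S$. The first step I would take is to apply Nagata compactification to realize $X$ as an open subscheme $X \hookrightarrow \overline{X}$ of an $S$-scheme $\overline{X}$ proper over $S$. The composition $Y \hookrightarrow X^{\an} \hookrightarrow \overline{X}^{\an}$ is then a morphism of rigid spaces over $S$ whose source is proper over $S$ and whose target is separated over $S$, hence is itself proper, and so has closed image. In particular $Y$ is closed inside $\overline{X}^{\an}$.

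Rigid GAGA applied to the proper $S$-scheme $\overline{X}$ then produces a unique closed subscheme $Z \subset \overline{X}$ with $Z^{\an} = Y$. It remains to verify that $Z$ is contained in the open part $X$. The scheme-theoretic intersection $Z \cap (\overline{X} \setminus X)$ is a locally closed subscheme of finite type over $S$ whose analytification is $Y \cap (\overline{X} \setminus X)^{\an}$, and the latter is empty because $Y \subset X^{\an}$. Since a non-empty scheme of finite type over $S$ has non-empty analytification, we conclude $Z \subset X$, finishing the surjectivity.

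The principal obstacle I anticipate is the invocation of rigid GAGA for proper $S$-schemes over an affinoid base, exactly as flagged by the author's references; everything else is formal manipulation with properness and openness, together with the Nagata bootstrap. In particular, one should take some care to check that the notion of properness the author is using agrees with the one for which both the ``proper map has closed image'' statement and the existence theorem for coherent sheaves are available in the rigid category over $S$.
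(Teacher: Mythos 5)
Your proposal is correct, but it is worth noting that the paper does not actually prove this lemma: it states it with a ``compare'' citation to EGA $\text{III}_1$, Cor (5.1.8), and the preceding paragraph indicates the intended route, namely to transplant the EGA existence-theorem argument (coherent sheaves with proper support on a separated finite-type scheme, via the rigid/algebraic coherent sheaf correspondence of Abbes, Sec.\ 7.4) into the rigid category. Your argument reaches the same conclusion by a slightly different and more self-contained decomposition: instead of invoking the ``proper support'' refinement of the existence theorem directly on $X$, you reduce to the proper case by a Nagata compactification $X \hookrightarrow \overline{X}$, apply rigid GAGA to $\overline{X}$, and then rule out boundary components by the observation that a nonempty finite-type scheme has nonempty analytification. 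This buys you a cleaner black-box use of GAGA for proper $S$-schemes (K\"opf's theorem), at the cost of needing Nagata compactification and the fact that proper rigid morphisms have Zariski-closed image (Kiehl's proper mapping theorem applied to the pushforward of the structure sheaf); the one step you should spell out is that the locally closed immersion $Y \hookrightarrow \overline{X}^{\an}$ with analytically closed image is in fact a closed immersion, so that the GAGA bijection for $\overline{X}$ applies to it. Also, $\overline{X}\setminus X$ is closed, not merely locally closed, so $Z\cap(\overline{X}\setminus X)$ is a closed subscheme; this only simplifies your argument. Both routes are legitimate; yours is arguably the more transparent implementation of the reduction the paper leaves implicit.
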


\begin{definition} The {\em tropicalization} of a proper rigid subspace $Y$ of $X(\Delta)^{\an}$ is $\Trop(Y\cap T)$ where $Y$ is considered as a subscheme of $X(\Delta)$.
\end{definition}

\subsection{Hilbert functor}

In \cite[Sec. 4]{CRelAmple}, Conrad constructs the Hilbert functor for rigid spaces.  Let $f:X\rightarrow S$ be a proper morphism of rigid spaces,.  The Hilbert functor $\Hilb(X/S)$ classifies closed immersions $Z\hookrightarrow X\times_S T$ such that $Z$ is flat over $T$ for rigid spaces $T$ over $S$.  If $L$ is an $S$-ample invertible sheaf on $X$, then the Hilbert functor is the disjoint union of subfunctors $\Hilb_Q(X/S)$ for Hilbert polynomials $Q\in\Q[t]$.  In this case, by \cite[Thm 4.1.3]{CRelAmple}, the Hilbert functor is represented by a proper rigid space.  Moreover, the construction of the Hilbert scheme commutes with analytification: the Hilbert functor for Hilbert polynomial $Q$ is represented by the analytification of the Hilbert scheme $\Hilb_Q(X/S)$.

\section{Weight Subdivisions}
We review some notions of weight complexes and subdivisions.  These are straightforward generalizations of dual fans and weight polytopes as described in \cite{GKZ}.   Our exposition follows \cite[Sec. 9]{GublerGuide}.

We will study the case where $\T=\Spec \K^\circ[M]$ acts linearly on some projective space, $\P^{N'}_{\K^\circ}$.  Here, $\P^{N'}$ is not necessary the completion of $T=\Spec \K[M]$.
By \cite[Lem 9.7]{GublerGuide}, such an action lifts to a linear representation of $\T$ on $\A^{N'+1}_{\K^\circ}$.  By the arguments of \cite[Prop 9.8]{GublerGuide}, one may pick a $\K^\circ$-basis $v_0,\dots,v_{N'}$ of $(\K^\circ)^{N'+1}$ that gives a  simultaneous eigenbasis for the $T$-action on $\K^{N'+1}$.  Let $\chi_0,\dots,\chi_{N'}$ (possibly with repetitions)  be the characters of $v_0,\dots,v_{N'}$.  Given $\mathbf{x}\in \P^{N'}(\K)$, we may pick a lift $x\in \K^{N'+1}$ and write
\[x=\sum_i c_iv_i.\]
Let $\ca$ be the set of characters $\chi_i$ for which $c_i\neq 0$.  The {\em weight polytope} associated to $\mathbf{x}$ is $\Wt(\mathbf{x})=\Conv(\ca)$.  Define the height function $a:\ca\rightarrow\Gamma$ by
\[a(\chi)=\min \{v(c_i)|\chi_i=\chi\}.\]
It is independent of the choice of lift $x$ up to addition by a global constant and is independent of the choice of simultaneous eigenbasis.  The upper hull associated to $x$, $\UH_x$ is the convex hull of $\{(\chi,\lambda)\in M_\R\times \R|\chi\in\ca, \lambda\geq a(\chi)\}$.  The images of its faces under the projection $\pi:M_\R\times\R\rightarrow M_\R$ gives the {\em weight subdivision} of $\Wt(\mathbf{x})$.

Note that $\R$-translates of the upper hull induce the same subdivsion.  Points $\chi\in\ca$ for which $(\chi,a(\chi))$ lies on the face of $\UH_x$ are said to be vertices of the face in the weight subdivision.

\begin{definition} \label{d:wtcomplex}
The {\em weight complex} $\cc$ of $x$ is the linearity complex of the piecewise-linear function
\[F(w)=\min_{\chi\in \ca}(\<\chi,w\>+a(\chi)).\]
That is, it is the complete polyhedral complex in $N_\R$ whose $n$-dimensional cells are the domains of linearity of $F(w)$.   
\end{definition}

The weight complex and subdivision are independent of the choice of lift $x$.  In fact, once $Wt(\mathbf{x})$ is known, the weight complex $\cc$ determines the upper hull up to $\R$-translates \cite[Thm 2.5]{F}.

When the weight complex is given the polyhedral complex structure induced by the function $F(w)$, there are bijective correspondences \cite[Prop 9.12]{GublerGuide} between 
\begin{enumerate}
\item polyhedra of the weight complex, 
\item faces of the weight subdivision, and 
\item $T_{\tilde{\K}}$-orbits of the special fiber of $\overline{T \cdot {\mathbf x}}\subset\P^{N'}_{\K^\circ}$
\end{enumerate}

To a face $Q$ of the weight subdivision, we associate the polyhedron $\sigma$ of the weight complex for which the minimum of $\<\chi,w\>+a(\chi)$ is achieved exactly at the vertices $\chi$ of $Q$ for all $w\in\sigma^\circ$.  This corresponds to the orbit containing initial degenerations of the form $\init{w}{\mathbf{x}}$ for $w\in\sigma^\circ$.  The characters for which $\init{w}{\mathbf{x}}$ has a non-zero component in the eigenbasis are exactly the vertices of $Q$.  To a polyhedron $\sigma$ of the weight complex corresponds an algebraic subtorus $T_\sigma\subset T_{\tilde{\K}}$ characterized by the cocharacter space $(N_\sigma)_\R\subset N_\R$ of $T_\sigma$ being equal to $\Span(\sigma-w)$ for $w\in\sigma^\circ$.  The initial degeneration $\init{w}{\mathbf{x}}$ is invariant under $T_\sigma$ for $w\in\sigma^\circ$ \cite[Lem 4.21]{KTT}.

Let $\partial\Wt({\mathbf{x}})$ be the extremal vertices of $\Wt({\mathbf{x}})$.  Let $\P^*$ be the Zariski open subset of $\P^{N'}$ such that for all $\chi\in\partial\Wt({\mathbf{x}})$, $c_i\neq 0$ for some $i$ with $\chi_i=\chi$.  

\begin{lemma} \label{l:wtcomplex} The set $U_\cc$ of all $\mathbf{x}\in (\P^*)^{\an}$ with fixed weight complex $\cc$ is an admissible open.
\end{lemma}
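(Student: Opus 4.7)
The plan is to describe $U_\cc$ explicitly as an admissible subset of $(\P^{N'})^{\an}$, cut out by finitely many equalities and strict inequalities among the coefficients $c_i$. First, $(\P^*)^{\an}$ is admissible as the analytification of a Zariski open. For $\mathbf{x}\in\P^*$ the weight polytope $\Wt(\mathbf{x})=\Conv(\partial\Wt)$ is fixed, so by the bijection in \cite[Prop 9.12]{GublerGuide} the polyhedral complex $\cc$ uniquely determines an assignment $\sigma\mapsto\chi^\sigma\in\partial\Wt$ of a character to each maximal cell $\sigma$ of $\cc$. To eliminate the implicit minimum in $a(\chi)=\min\{v(c_i):\chi_i=\chi\}$, I cover $(\P^*)^{\an}$ by finitely many admissible subdomains $V_{\mathbf{k}}$ indexed by choice functions $\mathbf{k}$ selecting an index $k(\chi)\in\{i:\chi_i=\chi\}$ for each $\chi\in\partial\Wt$; take $V_{\mathbf{k}}$ to be the locus where $|c_{k(\chi)}|=\max\{|c_i|:\chi_i=\chi\}$ for each $\chi\in\partial\Wt$. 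On $V_{\mathbf{k}}\cap(\P^*)^{\an}$, each $c_{k(\chi)}$ is a unit and $a(\chi)=v(c_{k(\chi)})$.

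Since $\cc$ is $\Gamma$-rational, every vertex $v$ of $\cc$ lies in $N\otimes_\Z\Gamma$, so $\langle\chi,v\rangle\in\Gamma=v(\K^*)$ for all $\chi\in M$, and I can fix $\alpha_\chi^v\in\K^*$ with $v(\alpha_\chi^v)=\langle\chi,v\rangle$. On $V_{\mathbf{k}}$ I impose: \emph{(i) continuity at vertices}: for each vertex $v$ of $\cc$ and each pair of maximal cells $\sigma,\sigma'\ni v$, the equality $|c_{k(\chi^\sigma)}\alpha_{\chi^\sigma}^v|=|c_{k(\chi^{\sigma'})}\alpha_{\chi^{\sigma'}}^v|$, which is the intersection of two rational subdomains; and \emph{(ii) extraneous characters above the hull}: for each vertex $v$, each maximal cell $\sigma\ni v$, and each $i$ such that $\chi_i$ is not the label of any maximal cell of $\cc$ containing $v$, the strict inequality $|c_i\alpha_{\chi_i}^v|<|c_{k(\chi^\sigma)}\alpha_{\chi^\sigma}^v|$, admissible by observation (4) since $c_{k(\chi^\sigma)}\alpha_{\chi^\sigma}^v$ is a unit. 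Finitely many such conditions intersected cut out $U_\cc\cap V_{\mathbf{k}}$; taking the union over the finite cover $\{V_{\mathbf{k}}\}$ yields $U_\cc$ as an admissible open.

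The main obstacle is verifying that (i) and (ii) precisely characterize ``weight complex equals $\cc$'' (not a coarsening or refinement). Condition (i) forces $F_\mathbf{x}$ to agree at every vertex of $\cc$ with the piecewise linear function $F_\cc$ prescribed by $\cc$ and the labels $\chi^\sigma$; linearity of both functions on each maximal cell $\sigma$ extends the agreement to all of $N_\R$, so $F_\mathbf{x}=F_\cc$ and both have linearity complex $\cc$. Condition (ii) prevents extraneous characters from creating additional vertices of the upper hull that would refine $\cc$. The recession-cone conditions needed to upgrade vertex-wise inequalities to cell-wise inequalities on the unbounded cells of $\cc$ are automatic, since $\chi^\sigma$ is by construction the extremal vertex of $\Wt$ dual to the recession cone of $\sigma$, so $\langle\chi_i-\chi^\sigma,r\rangle\geq 0$ for every $r$ in the recession cone of $\sigma$ and every $\chi_i\in\ca$.
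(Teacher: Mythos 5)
Your overall strategy---working ``downstairs'' in the weight complex and imposing equalities and inequalities at the vertices of $\cc$---is dual to the paper's argument, which works ``upstairs'' on the upper hull and imposes conditions at the characters; both lead to a finite union of Laurent domains. However, there are two genuine errors that prevent your conditions from cutting out $U_\cc$.

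\textbf{The labels $\chi^\sigma$ need not lie in $\partial\Wt$.} You assert that the bijection of \cite[Prop 9.12]{GublerGuide} assigns to each maximal cell $\sigma$ of $\cc$ an \emph{extremal} vertex $\chi^\sigma\in\partial\Wt$, and you build your cover $\{V_{\mathbf k}\}$ only from choice functions on $\partial\Wt$. But maximal cells of $\cc$ correspond to vertices of the weight \emph{subdivision}, not of the weight polytope; for a bounded maximal cell $\sigma$ the label $\chi^\sigma$ is typically an interior lattice point. (For $\ca=\{0,1,2\}\subset\Z=M$ with $a(0)=a(2)=0$, $a(1)=-1$, the weight complex has three cells $(-\infty,-1]$, $[-1,1]$, $[1,\infty)$ labelled $2,1,0$; the bounded cell is labelled $1\notin\partial\Wt=\{0,2\}$.) Consequently $k(\chi^\sigma)$ is undefined in your conditions (i) and (ii) whenever $\sigma$ is bounded, $\P^*$ does not guarantee $c_{k(\chi^\sigma)}\neq 0$ there, and the appeal to ``the extremal vertex dual to the recession cone'' in your final paragraph fails for exactly those $\sigma$ (though it is vacuously harmless when the recession cone is $\{0\}$). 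This can be repaired by enlarging the index set to all vertices of the weight subdivision determined by $\cc$ and adding the condition that the corresponding coefficients are units, but as written the argument has a gap.

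\textbf{Condition (ii) must be non-strict for non-vertex characters.} Your condition (ii) imposes a strict inequality for every $\chi_i$ that is not the label of a cell through $v$. This is correct when $\chi_i$ is the label of some cell not containing $v$, but it is wrong when $\chi_i$ is not a vertex of the weight subdivision at all: such a $\chi_i$ is allowed to sit \emph{on} the boundary of the upper hull (in the relative interior of a face) without altering the linearity complex. For instance, in $M=\Z^2$ with $\ca=\{(0,0),(1,0),(2,0),(0,1)\}$ and $a\equiv 0$ except $a(0,1)=-1$, the point $(1,0)$ lies on the edge of the hull, achieves the minimum at the unique vertex $v=(0,1)$ of $\cc$, and is not a cell label; this $\mathbf{x}$ has weight complex $\cc$ but violates your strict inequality. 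So your conditions carve out a proper admissible subset of $U_\cc$ rather than $U_\cc$ itself. The paper's proof handles exactly this by separating the two cases: an equality condition $\max_l|c_{\chi,l}|=e^{-a(\chi)}$ at vertices of the upper hull, and a non-strict bound $|c_{\chi,l}|\leq e^{-h(\chi)}$ for characters in the relative interior of a face, where $h(\chi)$ is the height of that face. You should adopt the same dichotomy: for $\chi_i$ equal to $\chi^\tau$ with $\tau\not\ni v$, keep the strict inequality; for $\chi_i$ not a vertex of the weight subdivision, replace $<$ by $\leq$. With those two fixes your dual approach goes through, but as stated the proof is incorrect.
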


\begin{proof}

Let $UH\subset M_\R\times \R$ be the upper hull inducing the weight complex $\cc$ such that $\pi_\R(\UH)=[0,\infty)$.  Pick an ordering $\{\chi_i\}$ of elements of $\ca$.
Let $v_{\chi,1},\dots,v_{\chi,r_\chi}$ for varying $\chi$ be a $\T$-eigenbasis for $(\K^\circ)^{N+1}$.  Write an element $x\in (\K)^{N+1}$ as $x=\sum_{\chi_i,j} c_{\chi_i,j}v_{\chi_i,j}$.

 Let $(\chi_{\min},0)$ be a point of the upper hull with minimum $\R$-coordinate.   Therefore, any $\mathbf{x}$ with weight complex $\cc$ can be lifted to $x\in\K^{N+1}$ satisfying $|c_{{\chi_i},j}|\leq 1$ for all $i,j$ and $c_{{\chi_{\min},j}}=1$ for some $j$. Therefore, $U_\cc$ is a finite union of its intersections with $\{c_{{\chi_{\min},j}}=1\}$ for various values of $j$.
Thus, we may work in a standard affinoid chart of $\P^{N'}$ with $c_{\chi_{\min},j}=1$.

Now, if $(\chi,a({\chi}))$ is a vertex of the upper hull, then we have
\[\max(|c_{{\chi},1}|,\dots,|c_{\chi,d_\chi}|)=e^{-a(\chi)}.\]
This condition can be rewritten as
\[\left(\bigcap_l \{|c_{\chi,l}|\leq e^{-a(\chi)}\}\right)\cap \left(\bigcup_l \{|c_{\chi,l}|= e^{-a(\chi)}\}\right).\]
We repeat this for all vertices of the upper hull.  If $\chi\in Q$ is not a vertex of the weight subdivision, then it is in the relative interior of a cell with vertices $\chi_1,\dots,\chi_k$ giving an integer convex dependency, $m\chi=\sum m_i\chi_i$.  The condition that $(\chi,v(c_{\chi,l}))$ lies above or on that that cell of the upper hull translates to $|c_{\chi,l}|\leq e^{-(\sum m_i h_i)/m}$ for all $l$ which is an admissible condition.  The intersection of all these conditions is a finite union of Laurent domains, hence admissible.
\end{proof}

\section{Chow varieties and tropicalization}

In this section, we give an approach to tropicalization based on the Chow variety rather than the Hilbert scheme.   We recommend \cite[Ch. I. 3]{Kollar} or \cite[Ch. 4]{GKZ} for foundational facts about the Chow variety.   We follow the convention of \cite{GKZ} in that we do not normalize the Chow variety.  The approach here closely follows \cite[Sec. 5]{KTT}.  

\subsection{Chow varieties}
Let $X(\Sigma)$ be a projective toric variety with a $T$-equivariant projective embedding $i:X(\Sigma)\hookrightarrow\P^{n}$.  The universal bundle $\O(1)$ on $\P^{n}$ pulls back to a very ample line bundle on $X(\Sigma)$.  For a $d$-dimensional subvariety $V$ of $X(\Sigma)$, we define the degree $\deg(V)$ to be the integer $\deg(c_1(\O(1))^d\cap [V])$.  We can extend this degree linearly to purely $d$-dimensional algebraic cycles.  

The Chow variety $\Chow_{d,d'}(X(\Sigma))$ parameterizes purely $d$-dimensional effective cycles in $X(\Sigma)$ of degree $d'$.  In fact, it coarsely represents a particular moduli functor of effective cycles \cite[I.4.14]{Kollar}.
Moreover, it is a closed subscheme of $\Chow_{d,d'}(\P^{n})$ which, in turn,  is a subscheme of some $\P^{N'}$.  The construction of the point $R_V$ of $\Chow_{d,d'}(\P^n)$ corresponding to a $d$-dimensional closed integral subscheme $V$ of degree $d'$ is as follows: one takes the locus of hyperplanes $(H_0,\dots,H_d)$ in $((\P^n)^\vee)^{d+1}$ such that the intersection $V\cap H_0\cap\dots\cap H_d$ is non-empty; this is a hypersurface in $((\P^n)^\vee)^{d+1}$ of multi-degree $(d',\dots,d')$; its defining equation is the {\em Chow form} $R_V$.  Now, the $T$-action on $\P^n$ induces an action on $((\P^n)^\vee)^{d+1}$ and hence on the defining equations of hypersurfaces in $((\P^n)^\vee)^{d+1}$.  Let $\P^{N'}=\P(\Gamma(\O(d',\dots,d')))$, the space of multi-degree $(d',\dots,d')$ hypersurfaces in $((\P^n)^\vee)^{d+1}$.
It follows that the $T$-action on $X(\Sigma)$ induces an action on $\Chow_{d,d'}(X(\Sigma))$ which extends to $\P^{N'}$.  The definition of Chow forms extends to cycles as follows:  the Chow form of a cycle $Z=\sum m_i[V_i]$ is $R_Z=\prod R_{V_i}^{m_i}$.
The addition of cycles induces a morphism  (see \cite[Lem 3.24.2]{Kollar})
\[u:\Chow_{d,d'_1}(X(\Sigma))\times \Chow_{d,d'_2}(X(\Sigma))\rightarrow \Chow_{d,d'_1+d'_2}(X(\Sigma)).\]

Since the Chow form respects flat degenerations \cite[(I.3.23.1.7)]{Kollar}, taking the Chow form commutes with initial degenerations in the following sense:
\[\init{w}{R_V}=R_{\init{\Sigma,w}{V}}.\]

Let $\Chow(X(\Sigma))$ be the disjoint union of $\Chow_{d,d'}(X(\Sigma))$ taken over all $d,d'$.
There is a natural $T$-equivariant {\em fundamental cycle} morphism $\FC:\Hilb(X(\Sigma))\rightarrow \Chow(X(\Sigma))$ taking a scheme to the underlying cycles of its top-dimensional components \cite[I.6.3]{Kollar}.

\subsection{The Chow complex}

Let $X(\Sigma)\subset\P^n$ be a $T$-equivariantly embedded projective toric variety.  Let $d,d'$ be non-negative integers.   We may perform the construction of the Chow variety over $\K^\circ$.  This induces an action of $\T$ of $\P^{N'}_{\K^\circ}$ and allows us to apply the results in the previous section.  

\begin{definition}
Let $Y\subset X(\Sigma)$ be a $d$-dimensional closed subscheme of degree $d'$.
The weight complex corresponding to the Chow form $R_Y\in\P^{N'}$
is called the {\em Chow complex.}  It is given the polyhedral structure induced by the piecewise-linear function $F(w)$ as in Definition \ref{d:wtcomplex}.  
\end{definition}

The Chow complex was introduced in \cite[Sec. 5]{KTT} as a valued field analogue of the secondary fan \cite{GKZ}, but the first in-depth study of it was by Fink \cite{F}.  It was shown that the Gr\"obner complex is a refinement of the Chow complex in \cite[Prop 5.12]{KTT}.   Examples of Chow complexes of hypersurfaces and linear subspaces are discussed in \cite{F}.  For the special case of a projectively embedded toric variety with the embedding defined over $\Z$, the Chow complex is the well-known secondary fan \cite{GKZ}.   For our purposes, we do not need to determine the Chow complex of any cycle.   Instead, we will make use of the following characterization of the Chow complex: 
two points $w,w'$ are in the relative interior of the same polyhedron of the Chow complex if and only if $\init{w}{R_Y}$ and $\init{w'}{R_Y}$ are $T_{\tilde{\K}}$-translates.
Because $R_{\init{\Sigma,w}{Y}}=\init{w}{R_Y}$, the underlying cycles of $\init{\Sigma,w}{Y}$ and $\init{\Sigma,w'}{Y}$ are $T_{\tilde{\K}}$-translates.
  
\begin{lemma} The complex $\Trop(Y^\circ)$ is supported on the $d$-skeleton of the Chow complex.
\end{lemma}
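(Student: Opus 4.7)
The plan is to take an arbitrary point $w\in\Trop(Y^\circ)$, let $\sigma$ denote the cell of the Chow complex whose relative interior contains $w$, and prove $\dim\sigma\leq d$. Since the Chow complex is a complete polyhedral complex, this exactly says that $|\Trop(Y^\circ)|$ is contained in the union of cells of dimension at most $d$, i.e.\ the $d$-skeleton.

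First I would apply the weight-complex theory of Section 4 to the point $R_Y\in\P^{N'}$: for $w\in\sigma^\circ$ the initial degeneration $\init{w}{R_Y}$ is invariant under the subtorus $T_\sigma\subset T_{\tilde{\K}}$. Combined with the compatibility $R_{\init{\Sigma,w}{Y}}=\init{w}{R_Y}$ recorded earlier in this section, this means the underlying cycle of $\init{\Sigma,w}{Y}$ is $T_\sigma$-invariant. Because $T_\sigma$ is connected, it cannot permute distinct irreducible components, and so each top-dimensional component $V$ of $\init{\Sigma,w}{Y}$ is individually $T_\sigma$-stable.

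Next I would pass from the cycle back to the open torus. The assumption $w\in\Trop(Y^\circ)$ gives $\init{w}{Y^\circ}=\init{\Sigma,w}{Y}\cap T_{\tilde{\K}}\neq\emptyset$, so at least one irreducible component $V$ of $\init{\Sigma,w}{Y}$ meets $T_{\tilde{\K}}$. Flatness of the closure of $t^{-1}Y_{\K'}$ over $\K'^\circ$ makes $\init{\Sigma,w}{Y}$ pure $d$-dimensional, so $\dim V=d$. The open subvariety $V^\circ:=V\cap T_{\tilde{\K}}$ is still $d$-dimensional, and since both $V$ and $T_{\tilde{\K}}$ are $T_\sigma$-stable, so is $V^\circ$.

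Finally, a $T_\sigma$-invariant subvariety of $T_{\tilde{\K}}$ has dimension at least $\dim T_\sigma=\dim\sigma$, which yields $d\geq\dim\sigma$. The step requiring the most care is the transition from $T_\sigma$-invariance of the Chow form (supplied by the weight-complex machinery) to $T_\sigma$-invariance of each individual $d$-dimensional component of $\init{\Sigma,w}{Y}$; this rests on the bijective correspondence between Chow forms and effective cycles together with the connectedness of $T_\sigma$.
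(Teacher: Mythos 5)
Your proof is correct, but it takes a genuinely different route from the paper's. You fix $w\in\Trop(Y^\circ)$, identify the unique cell $\sigma$ of the (complete) Chow complex with $w\in\sigma^\circ$, and bound $\dim\sigma\le d$ directly: the $T_\sigma$-invariance of $\init{w}{R_Y}$ plus injectivity of the Chow-form assignment forces each irreducible component of the pure $d$-dimensional cycle $\init{\Sigma,w}{Y}$ to be $T_\sigma$-stable (connectedness of $T_\sigma$), and a component meeting $T_{\tilde{\K}}$ then contains a free $T_\sigma$-orbit, giving $\dim T_\sigma\le d$. All the ingredients you invoke are available in the paper ($\init{w}{\mathbf{x}}$ is $T_\sigma$-invariant for $w\in\sigma^\circ$; $R_{\init{\Sigma,w}{Y}}=\init{w}{R_Y}$; purity of the initial degeneration; $\init{w}{Y^\circ}=\init{\Sigma,w}{Y}\cap T_{\tilde{\K}}$). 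The paper instead adapts the Gr\"obner-complex argument: it shows that any Chow-complex polyhedron whose relative interior meets $|\Trop(Y^\circ)|$ is entirely contained in $|\Trop(Y^\circ)|$ (since initial degenerations at two points of $\sigma^\circ$ have $T_{\tilde{\K}}$-translate underlying cycles), and then the dimension bound comes for free from the Bieri--Groves fact that $\Trop(Y^\circ)$ is purely $d$-dimensional. The trade-off: your argument does not rely on Bieri--Groves for the dimension bound and is more self-contained on that point, whereas the paper's version yields the slightly stronger conclusion that $\Trop(Y^\circ)$ is a union of closed faces of the Chow complex, which is what licenses the remark immediately following the lemma that the multiplicity function is constant on $d$-dimensional Chow-complex polyhedra meeting $\Trop(Y^\circ)$. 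If you wanted that consequence too, you would add the paper's translate argument on $\sigma^\circ$; as a proof of the stated lemma ($|\Trop(Y^\circ)|$ contained in the $d$-skeleton), yours is complete.
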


\begin{proof}
We adapt the proof of \cite[Thm 10.14]{GublerGuide} which is the analogous statement for the Gr\"{o}bner complex.   For ease of notation, before taking an initial degeneration $\init{\Sigma,w}{Y}$, we we will suppose that we have extended $\K$ to a field $\K'$ such that $T_{\K'}$ has elements of valuation $w$.

We must show that any polyhedron of the Chow complex whose relative interior intersects $|\Trop(Y^\circ)|$ is, in fact, contained in $|\Trop(Y^\circ)|$. Let $\sigma$ be a polyhedron of the Chow complex and $w\in\sigma^\circ\cap |\Trop(Y^\circ)|$.  
By the fundamental theorem of tropical geometry, $w\in\Trop(Y^\circ)$ if and only if 
$\init{\Sigma,w}{Y}\cap T_{\tilde{\K}}\neq \emptyset$.   By \cite[Prop 11.3]{GublerGuide},
$\init{\Sigma,w}{Y}$ is purely $d$-dimensional.  Consequently, the components of 
$\init{\Sigma,w}{Y}$ intersecting $T_{\tilde{\K}}$ have a non-trivial underlying cycle. 
From our characterization of the Chow complex, if $w'\in\sigma^\circ$ then the underlying cycle of  $\init{\Sigma,w'}{Y}$ is a $T_{\tilde{\K}}$-translate of the underlying cycle of $\init{\Sigma,w}{Y}$.  Therefore, its support intersects $T_{\tilde{\K}}$.  It follows that 
$\init{\Sigma,w'}{Y}$ intersects $T_{\tilde{\K}}$, and so 
$w'\in|\Trop(Y^\circ)|$.
\end{proof}

It follows that the multiplicity function is constant on $d$-dimensional polyhedra of the Chow complex that intersect $\Trop(Y^\circ)$.  
We will make use of the following result of Fink that lets us recover the Chow complex from the tropical variety. This result is a systematization of the method of orthant shoting introduced as \cite[Thm 2.2]{DFS}.  To prove orthant shooting in our context, one applies the proof in \cite[Sec. 10]{KTT}, justifying the intersection theory in our more general situation by \cite[Sec. 5]{OP}.

\begin{proposition}\cite[Thm 5.1]{F}\label{p:fink} Let $Y$ be a $d$-dimensional integral subscheme in $X(\Sigma)_{\K}$ intersecting $T$.  The Chow complex $\cc$ of $Y$ is determined by $|\Trop(Y^\circ)|$ and the multiplicity function $m$.
\end{proposition}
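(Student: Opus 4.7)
The plan is to reconstruct, from $(|\Trop(Y^\circ)|, m)$ alone and up to a global additive constant, the piecewise-linear function
\[F(w) = \min_{\chi \in \ca} (\<\chi, w\> + a(\chi))\]
whose linearity complex is by Definition \ref{d:wtcomplex} the Chow complex $\cc$. Equivalently, by the correspondence recalled just after Definition \ref{d:wtcomplex} together with the rigidity statement \cite[Thm 2.5]{F}, it suffices to recover the Chow polytope $\Wt(R_Y)$ together with the heights $a(\chi)$ at its lattice points, modulo $\R$-translation of the upper hull $\UH_{R_Y}$.

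The reconstruction would be carried out by orthant shooting. The Chow form is defined as the equation of the incidence locus in $((\P^n)^\vee)^{d+1}$ of $(d+1)$-tuples of hyperplanes meeting $Y$, so each monomial of $R_Y$ records intersections of $Y$ with a specific family of hyperplane configurations. Tropically, this translates into a stable intersection of $\Trop(Y^\circ)$ with $(d+1)$ tropical hyperplanes in general position, weighted by $m$, and such intersection numbers are computed purely combinatorially from $(|\Trop(Y^\circ)|, m)$ via the tropical intersection theory of \cite[Sec. 5]{OP}. For each $w \in N_\R$, the value $F(w)$ is (up to a global additive constant) the sum of valuations at which the $w$-translated tropical hyperplanes meet $\Trop(Y^\circ)$ with the prescribed weights. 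This realizes $F$ as a function of purely tropical data and identifies its domains of linearity with the cells of $\cc$. In the constant-coefficient case this is \cite[Thm 2.2]{DFS}; the adaptation to $\Gamma$-rational polyhedral complexes is carried out as in \cite[Sec. 10]{KTT}.

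The main obstacle is verifying that orthant shooting computes the correct Chow-form coefficients in the present non-trivially valued setting. The key compatibility is the identity $R_{\init{\Sigma,w}{Y}} = \init{w}{R_Y}$ already noted above, combined with the fact that stable intersection multiplicities coincide, after passing to an algebraically closed valued extension, with the algebraic intersection multiplicities on the initial degeneration---a consequence of balancing and the projection formula of \cite[Sec. 5]{OP}. Once these identifications are in place, the combinatorial argument of \cite[Thm 5.1]{F} carries over verbatim to yield the proposition.
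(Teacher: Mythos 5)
The paper does not actually prove this proposition; it is a citation of Fink's Theorem~5.1, with a one-sentence indication that orthant shooting from \cite[Thm 2.2]{DFS} carries over to the valued-field setting via \cite[Sec. 10]{KTT} and \cite[Sec. 5]{OP}. Your proposal is in exactly the same spirit: invoke orthant shooting, note the compatibility $R_{\init{\Sigma,w}{Y}} = \init{w}{R_Y}$, justify the intersection theory by \cite{OP}, and then defer to Fink's combinatorics. So the approach is essentially the same as the paper's.

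The one place you diverge from the paper's summary of Fink is in what you propose to reconstruct. The paper (following Fink) recovers the codimension-one skeleton of the Chow complex via the stable Minkowski sum formula and separately recovers the Chow polytope by orthant shooting, then combines them. You instead propose to reconstruct the piecewise-linear function $F(w)$ pointwise, claiming that $F(w)$ is (up to a global constant) ``the sum of valuations at which the $w$-translated tropical hyperplanes meet $\Trop(Y^\circ)$ with the prescribed weights.'' That statement is not justified and is, as written, imprecise: orthant shooting computes the vertex $\chi_w$ of the Chow polytope selected by a generic $w$, i.e.\ the \emph{combinatorial type} of $\init{w}{R_Y}$, by shooting along the degenerate orthant directions determined by the monomial map (not general-position tropical hyperplanes), while the height $a(\chi_w)$ requires a genuinely valued-field statement and is the content of what needs proof. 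Also note your invocation of \cite[Thm 2.5]{F} runs the wrong direction: that result says the polytope plus the weight complex determines the upper hull, so it does not help you get from the polytope-and-heights to the complex (that direction is trivial anyway). None of this is fatal since you ultimately hand off to Fink's argument, but the middle paragraph of your sketch would not survive as a proof on its own.
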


Fink gives a concise formula for the codimension $1$ skeleton of the Chow complex in terms of an operation he calls stable Minkowski sum.   From this skeleton and knowledge of the Chow polytope, one recovers the Chow complex.  We do not need to make use of Fink's explicit formula here.

Since $\Trop(Y^\circ)$ determines the Chow complex and $\Trop(Y^\circ)$ is supported on the $d$-skeleton of the Chow complex, once we have chosen a fan $\Sigma$, we may suppose that the weighted polyhedral structure $(\cd,m)$ on $|\Trop(Y^\circ)|$ is a refinement of the $d$-skeleton of the Chow complex.

\subsection{Tropicalization via the Chow variety}

Because the tropicalization of a variety $V$ only depends on its underlying cycle, we can give an alternative approach to tropicalization using the Chow form.  This formulation is directly analogous to that of \cite{GublerGuide} except that instead of the Hilbert point of $V$, we use the Chow form of $V$ and instead of the Gr\"{o}bner complex, we use the Chow complex.

Let $X(\Sigma)\subset \P^{n}$ be a $T$-equivariantly embedded projective toric variety.  Let $\partial X(\Sigma)$ denote the toric boundary of $X(\Sigma)$.  Let $i:\partial X(\Sigma)\rightarrow X(\Sigma)$ be the natural inclusion.  Under this inclusion, we have a projective embedding, $\partial X(\Sigma)\subset\P^n$.
Now, let 
\[i_*:\Chow_{d,d'}(\partial X(\Sigma))\rightarrow \Chow_{d,d'}(X(\Sigma))\]
be the induced map of Chow varieties \cite[Thm 6.8]{Kollar} that takes a cycle supported on $\partial X(\Sigma)$ to that same cycle to be considered as a cycle on $X(\Sigma)$.  
For a degree $d'$, let 
\[\Chow_{d,d'}(X(\Sigma))^{\partial}=\bigcup_{1\leq e'\leq d'} u(i_*(\Chow_{d,e'}(\partial X(\Sigma)))\times\Chow_{d,d'-e'}(X(\Sigma))).\]
This subscheme parameterizes purely $d$-dimensional cycles of degree $d'$ that have some component supported on the toric boundary.
Because $u$ is proper, this subscheme is Zariski closed.
Define $\Chow_{d,d'}(X(\Sigma))^\circ$ by 
\[\Chow_{d,d'}(X(\Sigma))^\circ=\Chow_{d,d'}(X(\Sigma))\setminus\Chow_{d,d'}(X(\Sigma))^\partial.\]
This is the Zariski open subset parameterizing cycles in $\Chow_{d,d'}(X(\Sigma))$ all of whose components intersect $T$.
Now, let $\Chow_{d,d',e'}(X(\Sigma))$ be the subscheme given by
\[\Chow_{d,d',e'}(X(\Sigma))=u(\Chow_{d,e'}(X(\Sigma))^\circ\times \Chow_{d,d'-e'}(\partial X(\Sigma))).\]
It parameterizes cycles of degree $d'$ that can be expressed as the sum of a degree $e'$ cycle all of whose components intersect $T$ and a degree $d'-e'$ cycle supported on $\partial X(\Sigma)$.  By applying Lemma \ref{l:locallyclosed} to the map $u:\Chow_{d,e'}(X(\Sigma))\times \Chow_{d,d'-e'}(X(\Sigma))\rightarrow \Chow_{d,d'}(X(\Sigma))$, we see that it is locally closed.  

Given an integral subscheme $Y$ of $X(\Sigma)$, the Chow form $R_Y$ contains information about the underlying cycle of $Y$.  This information is enough to determine $\Trop(Y^\circ)$.  The underlying set of $\Trop(Y^\circ)$ is the set of all $w$ such that the initial degeneration $\init{\Sigma,w}{Y}$ intersects $T_{\tilde{\K}}$.   The multiplicity function on $\Trop(Y^\circ)$ is characterized as follows \cite[Sec. 13]{GublerGuide}: for $w\in\sigma^\circ$ for $\sigma$, a top-dimensional polyhedron of $\Trop(Y^\circ)$, the cycle of the initial degeneration can be decomposed as 
\[[\init{w}{Y^\circ}]=\sum m_i [V_i] \] 
over an algebraically closed field where the $V_i$'s are the prime components; then $m(w,Y^\circ)=\sum m_i$; this turns out to depend only on $\sigma$ by the choice of polyhedral structure on $\Trop(Y^\circ)$.
Now, for a subtorus $T'\subset T$, we define the degree of $T'$ to be
\[\deg(T')=\deg(\overline{T'\cdot x})\]
where $x\in T\subset X(\Sigma)$. Because $\O(1)$ is very ample on $X(\Sigma)$, we may ensure that the intersection that computes the degree of $\overline{T'\cdot x}$ takes place in $T'\cdot x$.  Consequently, the degree is independent of the choice of $x$.

\begin{proposition} \label{p:chowcheck} Let $Y$ be an $d$-dimensional closed integral  subscheme of $X(\Sigma)$ of  degree $d'$.  Let $(\cd,m)$ be a weighted purely $d$-dimensional polyhedral subcomplex of a refinement of the $d$-skeleton of the Chow complex of $Y.$  Then 
$\Trop(Y^\circ)=(\cd,m)$ if and only if for all $w\in N_\R$, 
\begin{enumerate}
\item \label{i:supset} $w\in |\cd|$ if and only $\init{w}{R_Y}\not\in \Chow_{d,d'}(\partial X(\Sigma))$;
\item \label{i:mult} if $w\in \sigma^\circ$ for $\sigma\in \cd_{(d)}$ then $\init{w}{R_Y}\in \Chow_{d,d',e'}(X(\Sigma))$ where $e'=m(\sigma)\deg(T_\sigma)$.
\end{enumerate}
\end{proposition}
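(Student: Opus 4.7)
The plan is to translate both conditions into equivalent statements about the underlying cycle $[\init{\Sigma,w}{Y}]$ via the identity $\init{w}{R_Y} = R_{\init{\Sigma,w}{Y}}$ recorded above. Once both conditions have been reformulated this way, the equivalence with $\Trop(Y^\circ) = (\cd,m)$ becomes a direct application of the fundamental theorem of tropical geometry together with the degree-theoretic characterization of tropical multiplicities.

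For condition (1), I would argue that $\init{w}{R_Y} \in \Chow_{d,d'}(\partial X(\Sigma))$ if and only if every $d$-dimensional component of the cycle $[\init{\Sigma,w}{Y}]$ is supported on $\partial X(\Sigma)$; equivalently $\init{w}{Y^\circ} = \init{\Sigma,w}{Y} \cap T_{\tilde{\K}} = \emptyset$. By the fundamental theorem of tropical geometry this is exactly the condition $w \notin |\Trop(Y^\circ)|$, so condition (1) holding for all $w \in N_\R$ is equivalent to the set equality $|\cd| = |\Trop(Y^\circ)|$.

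For condition (2), fix $\sigma \in \cd_{(d)}$ and $w \in \sigma^\circ$. I would decompose the cycle uniquely as $[\init{\Sigma,w}{Y}] = Z_T + Z_\partial$, where $Z_\partial$ collects the components supported on $\partial X(\Sigma)$ and $Z_T$ collects the rest. Since $\init{\Sigma,w}{Y}$ is purely $d$-dimensional, each component meeting $T_{\tilde{\K}}$ does so in a dense open set, so $Z_T = \sum m_i [\overline{V_i}]$ where $[\init{w}{Y^\circ}] = \sum m_i [V_i]$ is the prime decomposition of the torus part. Membership $\init{w}{R_Y} \in \Chow_{d,d',e'}(X(\Sigma))$ is equivalent to $\deg(Z_T) = e'$. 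The key computation is that each prime $V_i$ is a $T_\sigma$-invariant integral subvariety of $T_{\tilde{\K}}$ of the same dimension as $T_\sigma$, so $V_i$ is the closure of a single $T_\sigma$-orbit through a point of $T_{\tilde{\K}}$ and $\deg(\overline{V_i}) = \deg(T_\sigma)$. Hence $\deg(Z_T) = (\sum m_i)\deg(T_\sigma)$, and since $\sum m_i$ is by definition the tropical multiplicity at $\sigma$, condition (2) at $w$ with $e' = m(\sigma)\deg(T_\sigma)$ is equivalent to $m(\sigma)$ matching the tropical multiplicity.

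Combining the forward and backward implications gives $\Trop(Y^\circ) = (\cd,m)$ iff (1) and (2) both hold. The hypothesis that $(\cd,m)$ refines a subcomplex of the Chow complex is what guarantees that $\init{w}{R_Y}$ is locally constant up to $T_{\tilde{\K}}$-translation on $\sigma^\circ$, so that (2) is a well-defined condition on $\sigma$ rather than on individual points. The main step requiring care is the orbit-closure identification $\deg(\overline{V_i}) = \deg(T_\sigma)$, which uses that $V_i$ is an integral $d$-dimensional subvariety of $T_{\tilde{\K}}$ invariant under the $d$-dimensional torus $T_\sigma$; once this and the cycle decomposition are in hand, the remainder is direct bookkeeping.
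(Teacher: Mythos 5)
Your proposal is correct and follows essentially the same route as the paper: condition (1) is translated via $\init{w}{R_Y}=R_{\init{\Sigma,w}{Y}}$ and the fundamental theorem into the set equality $|\cd|=|\Trop(Y^\circ)|$, and condition (2) is handled by splitting off the components meeting $T_{\tilde{\K}}$, using $T_\sigma$-invariance (coming from $w$ lying in the interior of a $d$-cell of the Chow complex) to identify each such component as a $T_\sigma$-orbit closure of degree $\deg(T_\sigma)$, so that the degree of the torus part equals $m(w,Y^\circ)\deg(T_\sigma)$. This matches the paper's argument step for step.
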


\begin{proof}
Now, $\init{\Sigma,w}{Y}$ is purely $d$-dimensional.  
By the fundamental theorem of tropical geometry, $w\in\Trop(Y^\circ)$ if and only if $\init{\Sigma,w}{Y}\cap T_{\tilde{\K}}\neq \emptyset$.  Therefore, $w\in\Trop(Y^\circ)$ is equivalent to  the components of $\init{\Sigma,w}{Y}$ intersecting $T_{\tilde{\K}}$ having a non-trivial underlying cycle.  Since $\init{w}{R_Y}=R_{\init{\Sigma,w}{Y}}$, this condition is equivalent to $\init{w}{R_Y}\not\in \Chow_{d,d'}(\partial X(\Sigma))$.  Therefore, $w\in \Trop(Y^\circ)$ if and only if $\init{w}{R_Y}\not\in \Chow_{d,d'}(\partial X(\Sigma))$.  We can conclude that (\ref{i:supset}) is equivalent to $|\Trop(Y^\circ)|=|\cd|$.

Let $w\in\sigma^\circ$ for $\sigma\in\cd_{(d)}$.  Let 
 \[Z=\sum m_i [V_i]\]
 be the sum of the cycles in $\init{\Sigma,w}{Y}$ that intersect $T_{\tilde{\K}}$.  
Because $w$ is in the relative interior of a $d$-dimensional polyhedron of the Chow complex,  $\init{w}{R_Y}$ is invariant under the $d$-dimensional torus $T_\sigma$.   It follows that the cycle $\init{\Sigma,w}{Y}$ and therefore each $V_i$ is invariant under $T_\sigma$.  Since each $V_i$ is $d$-dimensional, each $V_i$ is the closures of a translate of $T_\sigma$.
 Because $\init{w}{Y^\circ}\cap T_{\tilde{\K}}=\init{w}{Y^\circ}$, $Z$ is the cycle of $\overline{\init{w}{Y^\circ}}$.  Therefore, the tropical multiplicity at $w$ is given by $m(w,Y^\circ)=\sum m_i$.   It follows that  
 \[\deg(\overline{\init{\Sigma,w}{Y}\cap T})=\sum m_i\deg(V_i)=\sum m_i \deg(T_\sigma)=m(w,Y^\circ)\deg(T_\sigma).\]
Consequently (\ref{i:mult}) is equivalent to $m(w,Y^\circ)=m(\sigma)$ for all $w\in\sigma^\circ$.
\end{proof}

\begin{corollary} \label{c:pointchowcheck} Let $Y$ be a $d$-dimensional subvariety of $X(\Sigma)$.   Let $(\cd,m)$ be a purely $d$-dimensional weighted subcomplex of a refinement of the $d$-skeleton of the Chow complex of $Y$.
For each polyhedron $\sigma$ of $\cc$,  let $w_\sigma$ be a point in the relative interior of $\sigma$.  If the two conditions in Proposition \ref{p:chowcheck} are satisfied for all $w_\sigma$, then $\Trop(Y^\circ)=(\cd,m)$.
\end{corollary}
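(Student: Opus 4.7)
The plan is to deduce the corollary from Proposition \ref{p:chowcheck} by showing that both conditions verified there are \emph{constant on the relative interior of each Chow polyhedron}, so that the finitely many representatives $w_\sigma$ detect them everywhere. The whole argument is an equivariance observation combined with the defining property of the Chow complex.

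First I would record two pieces of $T_{\tilde{\K}}$-equivariance on the target. The subset $\Chow_{d,d'}(\partial X(\Sigma))\subset \Chow_{d,d'}(X(\Sigma))$ is $T_{\tilde{\K}}$-invariant because $\partial X(\Sigma)$ is $T$-stable, and each stratum $\Chow_{d,d',e'}(X(\Sigma))$ is $T_{\tilde{\K}}$-invariant because the addition morphism $u$ is $T$-equivariant. On the source, the defining property of the Chow complex (used already in the lemma above, and taken from \cite[Prop 9.12]{GublerGuide}) asserts that whenever $w,w'\in\sigma^\circ$ for the same Chow polyhedron $\sigma$, the initial degenerations $\init{w}{R_Y}$ and $\init{w'}{R_Y}$ are $T_{\tilde{\K}}$-translates. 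Combining these two inputs, for any $w\in\sigma^\circ$ the Boolean conditions ``$\init{w}{R_Y}\in \Chow_{d,d'}(\partial X(\Sigma))$'' and ``$\init{w}{R_Y}\in \Chow_{d,d',e'}(X(\Sigma))$'' depend only on $\sigma$, not on the choice of $w$ in $\sigma^\circ$; in particular they agree with their values at $w_\sigma$.

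Next I would argue that the combinatorial side is also constant on each Chow polyhedron. Because $(\cd,m)$ is a subcomplex of a refinement of the $d$-skeleton of $\cc$, every polyhedron of $\cd$ is contained in a unique Chow polyhedron. For a $d$-dimensional Chow polyhedron $\tau$, the equivariance already established together with condition (\ref{i:supset}) of Proposition \ref{p:chowcheck} applied at $w_\tau$ force $\tau^\circ$ to be either entirely inside or entirely outside $|\cd|$; and when $\tau\subset|\cd|$, condition (\ref{i:mult}) at $w_\tau$ pins down a single value $e'=m(\sigma)\deg(T_\sigma)$ that must agree for every $d$-polyhedron $\sigma\in\cd_{(d)}$ with $\sigma\subset\tau$. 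Since all such $\sigma$ share the same cocharacter lattice and hence the same $\deg(T_\sigma)=\deg(T_\tau)$, this fixes the multiplicity $m(\sigma)$ as well. Thus conditions (\ref{i:supset}) and (\ref{i:mult}) of Proposition \ref{p:chowcheck} hold at an arbitrary $w\in\tau^\circ$ if and only if they hold at $w_\tau$; the same verification handles Chow polyhedra of lower dimension since only the Chow-polyhedron content of $w$ controls all relevant data.

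Putting the two steps together, the hypothesis that (\ref{i:supset}) and (\ref{i:mult}) hold at every $w_\sigma$ for $\sigma\in\cc$ upgrades to the statement that they hold at every $w\in N_\R$, and Proposition \ref{p:chowcheck} delivers $\Trop(Y^\circ)=(\cd,m)$. The main obstacle, and the step requiring the most care, is the compatibility claim in the second paragraph: one must be sure that no $d$-dimensional Chow polyhedron $\tau$ can be partially in and partially out of $|\cd|$, and that the multiplicity on the $\cd$-pieces meeting $\tau^\circ$ is forced to be constant. This amounts to showing that any $(\cd,m)$ satisfying the sample-point conditions is automatically a union of full Chow polyhedra endowed with a Chow-polyhedron-constant multiplicity --- the combinatorial refinement of the support lemma proved just above.
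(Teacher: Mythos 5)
Your first paragraph is, in substance, the paper's entire proof: for $w\in\sigma^\circ$ with $\sigma\in\cc$, the defining property of the Chow complex makes $\init{w}{R_Y}$ and $\init{w_\sigma}{R_Y}$ into $T_{\tilde{\K}}$-translates, the loci $\Chow_{d,d'}(\partial X(\Sigma))$ and $\Chow_{d,d',e'}(X(\Sigma))$ are $T_{\tilde{\K}}$-invariant, and $|\cc|=N_\R$, so the ``algebraic'' side of both conditions is constant on each $\sigma^\circ$ and is detected by the sample point $w_\sigma$. Up to that point you match the paper exactly.

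The difficulty is your second paragraph. You rightly note that conditions (\ref{i:supset}) and (\ref{i:mult}) also depend on $w$ through the combinatorial data of $(\cd,m)$ --- whether $w\in|\cd|$, which cell of $\cd_{(d)}$ contains it, and that cell's multiplicity --- and that one must rule out a $d$-dimensional Chow cell $\tau$ being only partially covered by $|\cd|$. But your claimed resolution, that condition (\ref{i:supset}) applied at the single point $w_\tau$ ``forces'' $\tau^\circ$ to lie entirely inside or entirely outside $|\cd|$, and that condition (\ref{i:mult}) at $w_\tau$ ``pins down'' the multiplicity of every $\cd$-cell inside $\tau$, is not a deduction from the hypotheses. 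A condition checked at $w_\tau$ says nothing about the combinatorial status of other points of $\tau^\circ$: if $|\cd|$ contained only the half of $\tau$ containing $w_\tau$ (which the stated hypothesis ``subcomplex of a refinement of the $d$-skeleton'' permits), every sample-point condition could hold while condition (\ref{i:supset}) fails on the other half of $\tau^\circ$, where $\init{w}{R_Y}$ is still a $T_{\tilde{\K}}$-translate of $\init{w_\tau}{R_Y}$ and hence still outside $\Chow_{d,d'}(\partial X(\Sigma))$, yet $w\notin|\cd|$. What actually closes this is not the sample-point conditions but the standing convention of the section: by the support lemma and Proposition \ref{p:fink}, $(\cd,m)$ is taken so that $|\cd|$ is a union of closed $d$-cells of $\cc$ with $m$ constant on each such cell (your observation that $\deg(T_\sigma)=\deg(T_\tau)$ for $\sigma\subset\tau$ of the same dimension is correct and is the only part of that paragraph that genuinely follows). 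Granting that convention, the combinatorial side of both conditions is constant on each $\sigma^\circ$ by hypothesis, and your first paragraph finishes the proof; without it, the ``forcing'' step is a genuine gap (one the paper's own three-line proof also passes over silently).
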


\begin{proof} We note that if $w\in\sigma^\circ$ for $\sigma\in\cc$ then $\init{w}{R_Y}$ and $\init{w_\sigma}{R_Y}$ are $T_{\tilde{\K}}$-translates.  The two conditions of the proposition are $T_{\tilde{\K}}$-invariant.  Since the support of $\cc$ is $N_\R$, it follows that the conditions hold for every $w\in N_\R$.
\end{proof}

\section{Construction of the Realization Space}

In this section we construct the realization space of the balanced weighted rational polyhedral complex $(\cd,m)$.  We compactify $X(\Delta)$ to a projective toric variety $X(\Sigma)$.  This allows us to look at the Hilbert scheme parameterizing subschemes of $X(\Sigma)$.  The set of Hilbert points of schemes with fixed Chow complex will be shown to be an admissible open.  Then we may pick a point $w_\sigma$ in the relative interior of each polyhedron in $\cc$.  We impose the conditions in Proposition \ref{p:chowcheck} at each $w_\sigma$.  This is again an admissible open.  The intersection of all such admissible opens is our desired realization space by Corollary \ref{c:pointchowcheck}.

Pick a projective completion $X(\Sigma)$ of $X(\Delta)$ and a $T$-equivariant embedding $i:X(\Sigma)\hookrightarrow\P^n$.  Note that $X(\Delta)$ is a $T$-equivariant open subvariety of $X(\Sigma)$.

\begin{lemma}Let $(\cd,m)$ be a  weighted  polyhedral complex.  Then there is a finite type subscheme of $\Hilb(X(\Sigma))$ containing the Hilbert point $[Y]$ for any tropical realization $Y$ of $(\cd,m)$ in $X(\Sigma)$.
\end{lemma}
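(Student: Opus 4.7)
The plan is to bound the Hilbert polynomial of every tropical realization $Y$ of $(\cd,m)$ in $X(\Sigma)$, so that all such Hilbert points $[Y]$ lie in a finite union $\bigsqcup_{j=1}^s \Hilb_{Q_j}(X(\Sigma))$ of components, each of which is projective and hence of finite type. The two invariants to pin down are the dimension and the degree of $Y$ with respect to $i^*\O_{\P^n}(1)$, after which I will invoke classical boundedness to limit the possible Hilbert polynomials.

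First I would handle the dimension. Because $(\cd,m)$ is non-empty of pure dimension $d$, any realization $Y$ must satisfy $Y^\circ = Y \cap T \neq \emptyset$; otherwise $\Trop(Y^\circ)$ would be empty. Since $Y$ is integral, $Y^\circ$ is then open and dense in $Y$, so $Y = \overline{Y^\circ}$ and $\dim Y = \dim Y^\circ = d$ by Bieri--Groves. Next I would show the degree of $Y$ is determined by $(\cd,m)$ and the embedding. Writing $H = i^*\O_{\P^n}(1)$, the degree is $\deg(c_1(H)^d \cap [Y])$, which by the tropical intersection theory of \cite{OP} (applied to the dense torus orbit) can be computed as a stable tropical intersection of $(\cd,m)$ with the tropical class of a generic $H$-section in the fan $\Sigma$. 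This yields a single integer $d'$ depending only on $(\cd,m)$, $\Sigma$, and $i$, which serves as a uniform degree bound for all tropical realizations.

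Having fixed $\dim Y = d$ and $\deg Y = d'$, I would then invoke a classical boundedness statement: only finitely many Hilbert polynomials $Q_1, \dots, Q_s$ occur among integral closed subschemes of $\P^n$ of dimension $d$ and degree $d'$. One clean route is a uniform Castelnuovo--Mumford regularity bound for such subschemes (Mumford's bound), which forces the Hilbert polynomial to lie in a finite set; alternatively one may quote Kleiman's or Matsusaka's boundedness theorem. The desired finite-type subscheme is then $\bigsqcup_{j=1}^s \Hilb_{Q_j}(X(\Sigma))$, realized as a closed subscheme of $\bigsqcup_{j=1}^s \Hilb_{Q_j}(\P^n)$.

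The main obstacle will be the tropical computation of the degree: one has to ensure that the classical intersection number $\deg(c_1(H)^d \cap [Y])$ really coincides with a purely combinatorial quantity depending only on $(\cd,m)$, $\Sigma$, and the embedding, and in particular that components of $Y$ meeting the toric boundary do not interfere. This is guaranteed by $Y = \overline{Y^\circ}$ together with the projection and compatibility formulas in the intersection theory of \cite{OP}. Once this is in hand, the reduction to finitely many $\Hilb_{Q_j}$ is immediate from the boundedness theorem for integral subschemes of fixed degree and dimension.
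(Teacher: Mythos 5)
Your proposal is correct and follows essentially the same route as the paper: the paper likewise shows the degree $d'$ is determined by $(\cd,m)$ via the tropical intersection theory of \cite{OP} (choosing a generic complementary-dimensional linear space, made transverse and supported in $T$ by Kleiman--Bertini), and then invokes boundedness of Hilbert polynomials for fixed dimension and degree (citing the proof of \cite[Thm 3.2]{KPReal} where you cite Mumford/Kleiman directly) to take the finite union of Hilbert components.
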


\begin{proof}
Let $V$ be a projective subspace of $\P^n$ of complementary dimension to $Y$.   The tropicalization of $i(Y)^\circ$, considered as a subscheme of the dense torus of $\P^n$ is determined by $\Trop(Y^\circ)$ by \cite[Thm 13.17]{GublerGuide}.  By \cite[Thm 5.3.3]{OP} and the Kleiman-Bertini \cite{KleimanBertini} theorem, one may pick $V$ such that the intersection is
 transverse, is supported in $T$, and satisfies 
 $\Trop(V^\circ\cap i(Y)^\circ)=\Trop(V^\circ)\cdot\Trop(i(Y)^\circ)$.  Therefore, by \cite[Thm 5.1.1]{OP}, the intersection number of 
 $i(Y)$ and $V$ is determined by their tropicalizations.  Consequently, the degree $d'$ of $Y$ is 
 determined by $(\cd,m)$.  By the proof of \cite[Thm 3.2]{KPReal}, 
 there are finitely many possible 
 Hilbert polynomials for $Y$.  The desired subscheme of $\Hilb(X(\Sigma))$ is the union of the components corresponding to these Hilbert polynomials.
\end{proof}

Within $\Hilb_Q(X(\Sigma))$, the set of Hilbert points of integral closed subschemes contained in $X(\Delta)$ and intersecting $T$ is an open subscheme which we call $U_Q$.  Let $\cc$ be the Chow complex of a variety with tropicalization $(\cd,m)$. 

\begin{lemma} \label{l:hilbpoints} Let $U_\cc$ be the set of all points in $U_Q^{\an}$ with Chow complex $\cc$.  Then $U_\cc$ is an admissible open.
\end{lemma}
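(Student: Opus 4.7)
The plan is to realize $U_\cc$ as the preimage of a known admissible open (the locus of points with fixed weight complex in some projective space) under the analytification of an algebraic morphism, and then invoke the fact that preimages of admissible opens under morphisms of rigid spaces are admissible open.

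More precisely, first I would use the fundamental cycle morphism
\[
\FC:\Hilb_Q(X(\Sigma))\longrightarrow \Chow_{d,d'}(X(\Sigma))
\]
that sends the Hilbert point $[Y]$ of an integral $d$-dimensional closed subscheme $Y\subset X(\Sigma)$ to the cycle $[Y]$, whose image under the Chow embedding $\Chow_{d,d'}(X(\Sigma))\hookrightarrow \P^{N'}$ is exactly the Chow form $R_Y$. Composing these two morphisms yields an algebraic morphism
\[
\Phi:\Hilb_Q(X(\Sigma))\longrightarrow \P^{N'}
\]
with $\Phi([Y])=R_Y$ for integral $Y$. By construction the Chow complex of $Y$ coincides with the weight complex of $\Phi([Y])$, so a point $[Y]\in U_Q$ lies in $U_\cc$ if and only if $\Phi([Y])$ has weight complex $\cc$.

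Next I would analytify. Since $\Phi$ is a morphism of schemes of finite type over $\K$, it induces a morphism $\Phi^{\an}:\Hilb_Q(X(\Sigma))^{\an}\to (\P^{N'})^{\an}$ of rigid spaces, and $U_Q^{\an}\subset \Hilb_Q(X(\Sigma))^{\an}$ is an admissible open (as the analytification of a Zariski open). Since the weight polytope $\Wt(R_Y)$ depends only on $\cc$, the Zariski open subset $\P^*\subset \P^{N'}$ of Lemma \ref{l:wtcomplex} is determined by $\cc$; moreover, any $[Y]\in U_\cc$ automatically satisfies $\Phi([Y])\in \P^*$ because $R_Y$ has a nonzero coefficient in every eigenbasis direction whose character is an extremal vertex of $\Wt(R_Y)$. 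By Lemma \ref{l:wtcomplex}, the set $V_\cc\subset (\P^*)^{\an}$ of points with weight complex $\cc$ is an admissible open in $(\P^*)^{\an}$, hence in $(\P^{N'})^{\an}$.

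Finally, I would conclude that
\[
U_\cc \;=\; U_Q^{\an}\cap (\Phi^{\an})^{-1}(V_\cc),
\]
which is admissible open as the intersection of two admissible opens. The main work has already been done in Lemma \ref{l:wtcomplex}; the only nontrivial point here is the identification of the Chow complex of $Y$ with the weight complex of its Chow form, which is immediate from the definition of the Chow complex, and the observation that the fundamental cycle morphism is algebraic and hence analytifies to a morphism of rigid spaces under which admissibility is preserved by pullback.
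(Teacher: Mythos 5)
Your proposal is correct and follows essentially the same route as the paper: compose the fundamental cycle morphism with the Chow embedding into $\P^{N'}$, analytify, and pull back the admissible open from Lemma \ref{l:wtcomplex}, then intersect with $U_Q^{\an}$. Your added remark about why $\Phi([Y])$ automatically lands in $\P^*$ is a small point the paper leaves implicit, but the argument is the same.
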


\begin{proof}
The Hilbert polynomial $Q$ determines the dimension $d$ and degree $d'$ of the subschemes it parameterizes.  One has the fundamental cycle morphism
$\FC:\Hilb_Q(X(\Sigma))\rightarrow\Chow_{d,d'}(X(\Sigma))$.  
 By taking the composition of the fundamental cycle morphism $\FC^{\an}:\Hilb(X(\Sigma))^{\an}\rightarrow \Chow(X(\Sigma))^{\an}$ with the projective embedding 
 \[\Chow_{d,d'}(X(\Sigma))^{\an}\hookrightarrow(\P^{N'})^{\an},\]
  we obtain a map $\Hilb(X(\Sigma))^{\an}\rightarrow (\P^{N'})^{\an}$.  The points of $(\P^{N'})^{\an}$ with weight complex $\cc$ is an admissible open by Lemma \ref{l:wtcomplex}.  We can pull back that admissible set to $\Hilb_Q(X(\Sigma))^{\an}$ and intersect with $(U^Q)^{\an}$
\end{proof}

\begin{lemma} \label{l:chowcheckadmis} Let $w\in N_\R$.  The set $U_{Q,w}$ of all  $[Y]$ of $U_Q^{\an}$ satisfying the conditions of Proposition \ref{p:chowcheck} for $w$ is an admissible open.
\end{lemma}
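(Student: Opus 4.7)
The plan is to reduce the conditions of Proposition~\ref{p:chowcheck} at $w$ to the requirement that the Chow point $R_Y = \FC^{\an}([Y])$ have initial degeneration $\init{w}{R_Y}$ lying in a fixed Zariski locally closed $T$-invariant subset $C$ of $\Chow_{d,d'}(X(\Sigma))_{\tilde{\K}}$, show that this is an admissible open condition on $R_Y$ in $\P^{N'}(\K)^{\an}$, and then pull back via $\FC^{\an}$ to obtain $U_{Q,w}$ as an admissible open of $U_Q^{\an}$, in the spirit of Lemma~\ref{l:hilbpoints}.

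First I would split into three cases by the position of $w$: if $w\notin|\cd|$ then condition~(\ref{i:supset}) forces $C = \Chow_{d,d'}(\partial X(\Sigma))$ (Zariski closed); if $w\in|\cd|$ but $w$ is not in the relative interior of any top-dimensional cell, it forces $C = \Chow_{d,d'}(X(\Sigma))\setminus\Chow_{d,d'}(\partial X(\Sigma))$ (Zariski open); and if $w\in\sigma^\circ$ for some $\sigma\in\cd_{(d)}$, condition~(\ref{i:mult}) forces $C = \Chow_{d,d',e'}(X(\Sigma))$ with $e' = m(\sigma)\deg(T_\sigma)$ (Zariski locally closed). All three targets are $T$-invariant, so the condition $\init{w}{R_Y}\in C$ is unambiguously defined despite the $T_{\tilde{\K}'}$-indeterminacy in the initial degeneration.

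Next, passing to a valued extension $\K'/\K$ and choosing $t\in T(\K')$ with $v(t)=w$ gives $\init{w}{R_Y} = \spe(t^{-1}\cdot R_Y)$. Since $\spe^{-1}(C_{\tilde{\K}'})$ is admissible open by Lemma~\ref{l:inverse special} and translation by $t^{-1}$ is a rigid analytic automorphism of $\P^{N'}(\K')^{\an}$, the condition is admissible open over $\K'$. To avoid a delicate descent back to $\K$, I would verify admissibility directly over $\K$ along the lines of the proof of Lemma~\ref{l:wtcomplex}. Working in a standard affinoid chart of $\P^{N'}(\K)^{\an}$ obtained from a simultaneous $\T$-eigenbasis, a short calculation shows that for each $T$-eigenvector homogeneous polynomial $\tilde{g}$ of character $\psi$ and degree $d$ from a defining set of $C$, with lift $g\in\K^\circ[Z_0,\dots,Z_{N'}]$, the equation $\tilde{g}(\init{w}{R})=0$ translates into
\[|g(R)| \;<\; \max_{(\chi,j)} e^{-\langle\psi + d\chi,\,w\rangle}\,|c_{\chi,j}|^d,\]
a finite union over coordinate indices $(\chi,j)$ of inequalities of the form $|g(R)| < C_{\chi,j}\,|c_{\chi,j}|^d$ with $C_{\chi,j}\in\R_{>0}$.

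Each such inequality is admissible open: the condition fails on the zero locus of $c_{\chi,j}$, and on the Zariski open $\{c_{\chi,j}\neq 0\}$ it is covered by the rational domains $\{|c_{\chi,j}|\ge\epsilon_n\}$ on which it rewrites in canonical form as $|g(R)/c_{\chi,j}^d| < C_{\chi,j}$. Intersecting such admissible opens over a finite generating set of the ideal of $\overline{C}$, and combining with the corresponding conditions for the complement to carve out the Zariski open or locally closed cases, produces the admissible open condition $\{R:\init{w}{R}\in C\}$ in $\P^{N'}(\K)^{\an}$. Pulling this back along $U_Q^{\an}\to\Chow_{d,d'}(X(\Sigma))^{\an}\hookrightarrow\P^{N'}(\K)^{\an}$ yields the lemma. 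The principal obstacle is verifying admissibility of the hybrid inequalities $|g(R)| < C\,|c(R)|^d$, whose right-hand side mixes a real constant with the norm of a non-constant function; handling the boundary $\{c=0\}$ and producing an admissible exhaustion of $\{c\neq 0\}$ by rational domains where the condition reduces to the canonical admissible form $|f|<b$ is the technical core of the argument.
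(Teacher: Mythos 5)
Your proposal is correct and follows essentially the same route as the paper: identify the relevant $T$-invariant locally closed subset $C$ of the special fiber of the Chow variety, observe that the condition of Proposition \ref{p:chowcheck} at $w$ is exactly $\spe(t^{-1}R_Y)\in C$, apply Lemma \ref{l:inverse special}, and pull back along $\FC^{\an}$. The only divergence is that where you add an explicit eigenbasis computation (modeled on Lemma \ref{l:wtcomplex}) to avoid descending the admissible open from $\K'$ back to $\K$, the paper simply works after the valued field extension (as it does explicitly in Lemma \ref{l:smoothdeg}), so your extra verification is a legitimate but optional strengthening rather than a different method.
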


\begin{proof}
Let $\K'$ be a valued field extending $\K$ such that there exists $t\in T(\K')$ with valuation $w$.  Then the initial degeneration (defined up to $T(\tilde{\K}')$-action), $\init{\Sigma,w}{Y}$ has Chow form $\spe(t^{-1}R_Y)\in\Chow_{d,d'}(X(\Sigma))_{\tilde{\K}'}$.  The conditions in Proposition \ref{p:chowcheck} correspond to $\spe(t^{-1}R_Y)$ lying in a particular locally closed subset.  The inverse image of a locally closed subset under $\spe^{-1}$ is admissible.  The inverse image of this admissible subset under the map $\FC^{\an}$ is the desired set.
\end{proof}

We now complete the proof of Theorem \ref{t:realspace}:

\begin{proof}
The Chow complex $\cc$ of an integral, closed subscheme of $X(\Sigma)$ is determined by its
 tropicalization $(\cd,m)$.  We may suppose that $(\cd,m)$ is a refinement of the $d$-skeleton of 
 $\cc$.  For every polyhedron $\sigma$ of the $d$-skeleton of $\cc$, pick a point 
 $w_\sigma\in\sigma^\circ$.  Now, for every possible Hilbert polynomial $Q$ that can occur for a
realization of $(\cd,m)$, let $U_{Q,\cc}$ be the open subscheme of $\Hilb_Q(X(\Sigma))$ produced by
Lemma \ref{l:hilbpoints}.  Within $U_Q^{\an}$, consider the admissible open 
$R_Q=U_{Q,\cc}\cap\bigcap_\sigma U_{Q,w_\sigma}$.  Let $\BR_{\Delta,\cd,m}$ be the union of the sets $R_Q$ over Hilbert polynomials $Q$.   This set consists of the Hilbert points of all 
realizations of $(\cd,m)$ in $X(\Sigma)$ by Corollary \ref{c:pointchowcheck}.  Because $(\Hilb(X(\Sigma))^{\an}$ represents the functor of rigid flat families in $X(\Sigma)$ by \cite[Thm 4.1.3]{CRelAmple}, $\BR_{\Delta,\cd,m}$ represents the functor $\ccr_{\Delta,\cd,m}$.
 \end{proof}

\section{Sch\"{o}n locus}

In this section, we prove Theorem \ref{t:schon} which says that every sch\"{o}n realization of $(\cd,m)$ 
has an admissible open neighborhood of sch\"{o}n subvarieties in  $\BR_{\Delta,\cd,m}$.  As above, pick an equivariantly embedded projective toric variety $X(\Sigma)\subset\P^n$ containing $X(\Delta)$ as an open $T$-invariant subvariety.

Let $\Hilb_Q^{\sm}(X(\Sigma))$ be the subfunctor of $\Hilb_Q(X(\Sigma))$ parameterizing flat families of subschemes $Y\rightarrow S$ of $X(\Sigma)$ such that $Y\cap T$ is smooth over $S$.  

\begin{lemma} \label{l:smoothininterior} There is a finite union of locally  closed subschemes in $\Hilb_Q(X(\Sigma))$, $L$ such that for any field $\F$, $\Hilb_Q^{\sm}(X(\Sigma))(\F)=L(\F)$ 
\end{lemma}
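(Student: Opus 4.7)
The plan is to exhibit $L$ as the complement of the set-theoretic image of a constructible subset of the universal family under the structure map to $H := \Hilb_Q(X(\Sigma))$, and then invoke Chevalley's constructibility theorem.

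Let $\pi \colon \cy \to H$ denote the universal family, which is flat and proper by construction of the Hilbert scheme. Let $V \subset \cy$ be the locus where $\pi$ fails to be smooth; this is closed in $\cy$ since $\pi$ is of finite presentation. Because $\pi$ is flat, the fiber criterion for smoothness identifies $V \cap \pi^{-1}(s)$ with the non-smooth locus of $\cy_s$ viewed as a scheme over the residue field $\kappa(s)$, for every $s \in H$.

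Next, I would intersect with the torus: set $W := V \cap (T \times H)$, which is locally closed in $\cy$, being the intersection of the closed subset $V$ with the open subset $T \times H$. Since smoothness at a point is a local property and $\cy_s \cap T$ is open in $\cy_s$, the scheme $\cy_s \cap T$ is smooth over $\kappa(s)$ if and only if $W \cap \pi^{-1}(s) = \emptyset$. Consequently, a field-valued point $[Y] \in H(\F)$ lies in $\Hilb_Q^{\sm}(X(\Sigma))(\F)$ if and only if $[Y] \notin \pi(W)$.

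Finally, I would apply Chevalley's theorem: since $\pi$ is of finite type (in fact proper) between Noetherian schemes and $W \subset \cy$ is constructible, the image $\pi(W)$ is a constructible subset of $H$. Its complement $L := H \setminus \pi(W)$ is therefore also constructible, i.e., a finite disjoint union of locally closed subschemes of $H$ (endowed, say, with the reduced induced structure). By construction $L(\F) = \Hilb_Q^{\sm}(X(\Sigma))(\F)$ for every field $\F$, as required. The main subtlety is the appeal to the fiber criterion for smoothness to move between the relative formulation (non-smooth locus of $\pi$) and the absolute formulation (non-smooth locus of each fiber); this step is precisely where the flatness of the universal family is essential.
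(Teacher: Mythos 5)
Your proof is correct, but it reaches the conclusion by a different mechanism than the paper. You take the non-smooth locus $V$ of the universal family, intersect with the open set $T\times \Hilb_Q(X(\Sigma))$ to get a locally closed (hence constructible) subset $W$, and apply Chevalley's theorem to conclude that $\pi(W)$ and therefore its complement $L$ are constructible; since field-valued points do not see scheme structure, giving $L$ the reduced induced structure yields $L(\F)=\Hilb_Q^{\sm}(X(\Sigma))(\F)$. The paper instead forms the scheme-theoretic intersection of the non-smooth locus with the toric boundary, takes a flattening stratification of the base for both families, and on each stratum identifies the locus where the two induced classifying maps to the Hilbert scheme agree. Your route is shorter and uses only standard constructibility, which is all the lemma demands (and all that the later application in Lemma \ref{l:smoothdeg} uses, since the preimage under specialization of any finite union of locally closed subsets is admissible). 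The flattening-stratification construction produces strata carrying natural, possibly non-reduced scheme structures tied to the moduli of the non-smooth locus, which in principle retains more information; but the paper itself notes that even its $L$ need not represent the functor $\Hilb_Q^{\sm}$, so nothing needed downstream is lost by your argument. The two points you rightly flag as the load-bearing steps --- the fibrewise criterion for smoothness of a flat, finitely presented morphism, and the passage from $\kappa(s)$ to an arbitrary field extension $\F$ (smoothness over a field is preserved and reflected by field extension) --- are both standard and correctly invoked.
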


\begin{proof}
Consider the universal family $p:\cu\rightarrow\Hilb_Q(X(\Sigma))$.  Let $\cn\cs\subset\cu$ be the non-smooth locus of $p$.  $\cn\cs$ is closed \cite[Cor 2.12]{Liu}. Let 
\[\cn\cs^\partial=\cn\cs\cap \left(\partial X(\Sigma)\times\Hilb_Q(X(\Sigma))\right)\]
be the scheme-theoretic intersection of the non-smooth locus with the toric boundary of $X(\Sigma)$.  $\F$-points of $\Hilb_Q^{\sm}(X(\Sigma))$ are the points of $\Hilb_Q(X(\Sigma))$ over which $\cn\cs=\cn\cs^\partial$.

We apply a flattening stratification argument \cite[Ch. 8]{MumfordCurves} to find a decomposition of $\Hilb_Q(X(\Sigma))$ into a finite set $S_1,\dots,S_m$ of locally closed subschemes such that the fiber products
 \[\cn\cs\times_{\Hilb_Q(X(\Sigma) )} S_k,\ \cn\cs^\partial\times_{\Hilb_Q(X(\Sigma) )} S_k\]
 are flat over $S_k$ for all $k$.  There is a closed, possibly empty subscheme $L_k$ of $S_k$ where $\cn\cs$ and $\cn\cs^\partial$ agree.  In fact, consider the maps $i,i^\partial:S_k\rightarrow \Hilb(X(\Sigma))$ given by the flat families $\cn\cs\rightarrow S_k,\cn\cs^\partial\rightarrow S_k$; the locus $L_k$ is where $i,i^\partial$ agree.
 It follows that over $L_k$, $\cn\cs$ is disjoint from $T\times L_k$.
Let $L=L_0\cup L_1\cup\dots\cup L_m$.
Any $\F$-point of $\Hilb_Q^{\sm}(X(\Sigma))$ clearly belongs to some $L_k$.
\end{proof}
Note that $L$ does not necessarily represent the functor $\Hilb_Q^{\sm}(X(\Sigma))$ as taking the flattening stratification may change the moduli problem.

We will need to make use of Gr\"{o}bner complex \cite[Sec. 10]{GublerGuide}, the weight complex associated to the Hilbert point $[Y]\in\Hilb_Q(X(\Sigma))$ of a subscheme $Y\subset X(\Sigma)$.  Two points $w,w'$ are in the same cell of the Gr\"{o}bner complex if and only if the Hilbert points $[\init{\Sigma,w}{Y}]$ and $[\init{\Sigma,w'}{Y}]$ are $T_{\tilde{\K}}$-translates.  This happens if and only if the subschemes $\init{\Sigma,w}{Y}$ and $\init{\Sigma,w'}{Y}$ are $T_{\tilde{\K}}$-translates.  

\begin{lemma} \label{l:smoothdeg} Let $w\in N_\R$.  The subset  $S_w$ of $\Hilb(X(\Sigma))^{\an}$ such that $\init{w}{Y^\circ}$ is smooth is an admissible open.
\end{lemma}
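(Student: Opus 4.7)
The plan is to adapt the argument of Lemma \ref{l:chowcheckadmis} almost verbatim, with the Chow form replaced by the Hilbert point and the locally closed boundary condition on the Chow variety replaced by the locally closed subscheme $L$ furnished by Lemma \ref{l:smoothininterior}. Since $\Hilb(X(\Sigma))^{\an}$ decomposes as a disjoint union of $\Hilb_Q(X(\Sigma))^{\an}$ over Hilbert polynomials $Q$, it suffices to prove that $S_w \cap \Hilb_Q(X(\Sigma))^{\an}$ is admissible open for each $Q$.

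Fix $Q$ and let $L \subset \Hilb_Q(X(\Sigma))$ be the finite union of locally closed subschemes from Lemma \ref{l:smoothininterior}, whose $\F$-points for any field $\F$ parameterize those subschemes $Y \subset X(\Sigma)_\F$ for which $Y \cap T_\F$ is smooth. The key step is to reinterpret smoothness of $\init{w}{Y^\circ}$ as a specialization condition on the Hilbert point $[Y]$. Choose a valued extension $\K'$ of $\K$ and $t \in T(\K')$ with $v(t) = w$. Exactly as in Lemma \ref{l:chowcheckadmis}, the initial degeneration $\init{\Sigma,w}{Y}$ (defined up to $T_{\tilde{\K}'}$-translation) has Hilbert point $\spe(t^{-1}\cdot [Y]) \in \Hilb_Q(X(\Sigma))_{\tilde{\K}'}$. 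Since $\init{w}{Y^\circ} = \init{\Sigma,w}{Y} \cap T_{\tilde{\K}'}$ and the condition defining $L$ (smoothness on the torus) is $T$-invariant—matching the $T_{\tilde{\K}'}$-ambiguity of $\spe(t^{-1}\cdot[Y])$—we see that $\init{w}{Y^\circ}$ is smooth if and only if $\spe(t^{-1}\cdot[Y]) \in L$. Invoking Lemma \ref{l:inverse special}, the preimage $\spe^{-1}(L)$ is admissible open in $\Hilb_Q(X(\Sigma))^{\an}$, and pulling this back through the rigid automorphism $[Y]\mapsto t^{-1}\cdot[Y]$ yields an admissible open whose underlying set is precisely $S_w \cap \Hilb_Q(X(\Sigma))^{\an}$.

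The main obstacle is the same descent subtlety implicit in Lemma \ref{l:chowcheckadmis}: the construction is executed over the auxiliary extension $\K'$, while the conclusion is stated over $\K$. This is handled by observing that $S_w$ is intrinsically defined—independent of the choices of $t$ and $\K'$—and that an admissible open invariant under those choices descends from $\K'$ to $\K$ in the rigid setting, exactly as in the analogous step of Lemma \ref{l:chowcheckadmis}. Apart from this bookkeeping, no new ideas are required: the real work of the argument has been done in Lemma \ref{l:smoothininterior}, which packages the pointwise smoothness condition on the torus intersection into a locally closed condition on the Hilbert scheme amenable to the inverse-specialization machinery of Lemma \ref{l:inverse special}.
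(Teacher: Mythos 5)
Your proposal is correct and follows essentially the same route as the paper: reinterpret smoothness of $\init{w}{Y^\circ}$ as the condition that the specialization $\spe(t^{-1}\cdot[Y])$ lands in the locus $L$ of Lemma \ref{l:smoothininterior}, then invoke Lemma \ref{l:inverse special} to conclude admissibility. The extra care you take with the descent from $\K'$ to $\K$ and with $L$ being locally closed rather than closed (the paper's proof loosely calls it ``Zariski closed'') only makes the argument more complete.
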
 

\begin{proof} 
After a possible valued field extension of $\K$, we have an initial degeneration map  $\operatorname{in}_w:\Hilb(X(\Delta))^{\an}\rightarrow\Hilb(X(\Delta))_{\tilde{\K}}$ given by $[Y]\mapsto \spe(t^{-1}[Y])$ for some $t\in T$ with $v(t)=w$.  The set $S_w$ is the inverse image of the Zariski closed subset $L$ in Lemma \ref{l:smoothininterior}.  This is an admissible open.
\end{proof}

Now, for each polyhedron $\sigma$ of the Gr\"{o}bner complex of $Y$ let $w_\sigma$ be a point in the relative interior of $\sigma$. 

\begin{lemma} \label{l:smoothcheck}   
 If $\init{w_\sigma}{Y^\circ}$ is smooth for all $\sigma$ then $Y^\circ$ is sch\"{o}n.
\end{lemma}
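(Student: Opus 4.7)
My plan is to exploit the defining property of the Gr\"obner complex --- that initial degenerations at two points in the relative interior of the same cell are $T_{\tilde{\K}}$-translates of each other (up to valued field extension) --- together with the translation-invariance of smoothness. Since schönness is characterized by smoothness of $\init{w}{Y^\circ}$ for every $w \in \Trop(Y^\circ)$, it suffices to verify smoothness at an arbitrary such $w$.

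First, fix $w \in \Trop(Y^\circ)$. Let $\sigma$ be the cell of the Gr\"obner complex of $Y$ whose relative interior contains $w$; this exists because the Gr\"obner complex is supported on all of $N_\R$. By the characterization of the Gr\"obner complex recalled just before this lemma, after possibly extending $\K$, the subschemes $\init{\Sigma,w}{Y}$ and $\init{\Sigma,w_\sigma}{Y}$ differ by a $T_{\tilde{\K}}$-translation. Intersecting both with the $T_{\tilde{\K}}$-invariant open $T_{\tilde{\K}} \subset X(\Sigma)_{\tilde{\K}}$ and using the identity $\init{w}{Y^\circ} = \init{\Sigma,w}{Y} \cap T_{\tilde{\K}}$ noted in Section 2, it follows that $\init{w}{Y^\circ}$ and $\init{w_\sigma}{Y^\circ}$ are also $T_{\tilde{\K}}$-translates of one another.

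Since translation by an element of $T_{\tilde{\K}}$ is an isomorphism, it preserves smoothness. By hypothesis $\init{w_\sigma}{Y^\circ}$ is smooth, so $\init{w}{Y^\circ}$ is smooth as well. As $w \in \Trop(Y^\circ)$ was arbitrary, the characterization of schönness from Section 2 (following \cite[Prop 3.8]{HK}) yields that $Y^\circ$ is schön.

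The argument is essentially formal once one has the characterization of Gr\"obner cells and the translation-invariance of schönness, so I do not anticipate any substantive obstacle. The only minor care needed is keeping track of the valued field extension required to realize $w$ as the valuation of a torus element (so that translation by such an element is well-defined), but since smoothness is insensitive to such an extension, this introduces no real difficulty.
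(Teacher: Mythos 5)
Your proof is correct and takes essentially the same approach as the paper: both argue that for any $w\in\Trop(Y^\circ)$, the initial degeneration $\init{w}{Y^\circ}$ is a $T_{\tilde{\K}'}$-translate of $\init{w_\sigma}{Y^\circ}$ (for $\sigma$ the Gr\"obner cell containing $w$ in its relative interior), and then invoke the translation-invariance of smoothness together with the schönness criterion. You spell out the intermediate step of passing from translates of $\init{\Sigma,w}{Y}$ to translates of $\init{w}{Y^\circ}$ via intersection with the dense torus, which the paper states directly, but the argument is the same.
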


\begin{proof} 
If $w$ is in the relative interior of a polyhedron $\sigma$ of the Gr\"{o}bner complex, then after possibly passing to a valued field extension $\K'$ of $\K$,  $\init{w}{Y^\circ}$ and $\init{w_{\sigma}}{Y^\circ}$  are $T_{\tilde{\K}'}$-translates.
Consequently, $\init{w}{Y^\circ}$ is always smooth and $Y^\circ$ is sch\"{o}n.
\end{proof}

We now prove Theorem \ref{t:schon}: 

\begin{proof}
Let $Y\subset X(\Sigma)$ be such a sch\"{o}n tropical realization of $(\cd,m)$ .  We construct an 
admissible neighborhood of $[Y]$ in $\BR_{\Delta,\cd,m}$ such that every subvariety in that 
neighborhood is sch\"{o}n.  Let $\cg$ be the Gr\"{o}bner complex of $V$.  The set of all elements of 
$\BR_{\Delta,\cd,m}$ with Gr\"{o}bner complex $\cg$ is an admissible open $U_\cg$ by a proof 
directly analogous to Lemma \ref{l:hilbpoints}.  Construct the sets $S_{w_\sigma}$ by 
Lemma \ref{l:smoothdeg}.  The desired set is $U_\cg\cap \bigcap_\sigma S_{w_\sigma}$.  All 
subschemes corresponding to points of that set are sch\"{o}n by Lemma \ref{l:smoothcheck}.
 \end{proof}

We do not know if the set of all sch\"{o}n subvarieties is an admissible open.  The issue is that there may be infinitely many possibilities for the Gr\"{o}bner complex of a variety with fixed tropicalization.  This is not true for the Chow complex because it is determined by the tropicalization.  

\section{Formal and algebraic realizations}

In this section, we show that the existence of tropical realizations of $(\cd,m)$ over an extension of the completion of a field imply the existence of realizations over the algebraic closure of the original field.  Specifically, we will let $\L$ be a field with a non-trivial non-Archimedean valuation.  Let $\Lhat$ be the completion of $\L$ with respect to the valuation and $\Lbar$ be the algebraic closure of $\L$.   Let $\Lhat'$ be some extension of $\Lhat$. Our main result is that the existence of a realization over $\Lhat'$ implies the existence of a realization over $\Lbar$.   The case we have in mind for applications is when $\L=\C(t)$ with valuation induced by uniformizer $t$ and $\Lhat=\C((t))$.  A realization over $\Lbar$ is defined over some finite extension of $\C(t)$ which is the function field of some curve.  This implies that $(\cd,m)$ is realizable as an algebraic family over some curve.  

We make use of the following density lemma whose proof was explained to us by David Helm:

\begin{lemma} \label{l:approx} Let $X$ be an algebraic variety over $\L$ and $X^{\an}=(X\times_\L \Lhat)^{\an}$.  Let $U$ be a non-empty admissible open in $X^{\an}$.  Then $U$ contains a $\Lbar$-point of $X$.
\end{lemma}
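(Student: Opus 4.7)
The plan is to reduce to the density of $\Lbar$-points in affine space via Noether normalization and etale lifting; density in affine space follows from Krasner's lemma.

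First, I would make a series of reductions. Since analytifications of an affine open cover give an admissible cover of $X^{\an}$, I may replace $X$ by an affine chart meeting $U$, and then by the smooth locus $X_{\sm}$ (whose analytification is admissible open in $X^{\an}$, with complement of strictly smaller dimension, so $U \cap X_{\sm}^{\an}$ is non-empty). Thus I may assume $X$ is smooth and affine of dimension $d$. By generic etale Noether normalization, there is a finite etale morphism $\pi \colon X_0 \to V$ from a Zariski open $X_0 \subset X$ to a Zariski open $V \subset \A^d_\L$; after shrinking $U$ to lie inside $X_0^{\an}$, I may replace $X$ by $X_0$.

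Next, I would pick a $\Lhb$-point $x_0 \in U$ (which exists because $U$ is non-empty) and set $y_0 = \pi^{\an}(x_0) \in V^{\an}$. Since $\pi$ is etale at $x_0$, the analytified morphism $\pi^{\an}$ is a local isomorphism there, so there is an affinoid neighborhood $W \subset U$ of $x_0$ mapping isomorphically onto an affinoid neighborhood $W' \subset V^{\an}$ of $y_0$. The problem thus reduces to producing an $\Lbar$-point of $\A^d_\L$ in $W'$: its pullback via $(\pi^{\an}|_W)^{-1}$ is an $\Lbar$-point of $X$ in $W \subset U$, because the fiber of a finite etale morphism over an $\Lbar$-point of the base consists of $\Lbar$-points.

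For the density statement on affine space, I would invoke the density of $\Lbar$ in $\Lhb$ under the non-archimedean absolute value. Since $\L$ is dense in $\Lhat$ by definition, any $\alpha \in \Lhb$ has a minimal polynomial $f \in \Lhat[X]$ whose coefficients can be approximated by elements of $\L$, giving $f' \in \L[X]$ close to $f$; Krasner's lemma then produces a root of $f'$ close to $\alpha$, and this root lies in $\Lbar$. Applying this componentwise shows that $\Lbar^d$ is dense in $\Lhb^d$, so the affinoid $W'$ contains an $\Lbar$-point close to $y_0$. The main obstacle I expect is managing the reductions cleanly, especially ensuring that $U$ remains non-empty after intersecting with the smooth locus and the Noether-normalization open, and that $U$ contains an affinoid neighborhood of the chosen $\Lhb$-point on which $\pi^{\an}$ restricts to an isomorphism; these are standard facts about the G-topology and the local structure of etale morphisms, and given them the proof reduces to Krasner's lemma.
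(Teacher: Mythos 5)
Your overall strategy—reduce to the density of $\Lbar$-points in affine space via a finite morphism, then invoke Krasner's lemma—is in the same spirit as the paper's, but the central step as you've written it has a genuine gap. The claim that $\pi^{\an}$ is a local isomorphism near $x_0$ is false in rigid geometry over $\Lhat$ unless the residue field extension $k(y_0)\hookrightarrow k(x_0)$ is trivial, and you have no control over this since $x_0$ is an arbitrary point of the admissible open $U$ and its residue field may be a nontrivial finite extension of $\Lhat$. For a concrete failure, take $\pi:\A^1\to\A^1$, $z\mapsto z^2$, over $\Q_p$ (with $p$ odd): the point $x_0$ corresponding to the maximal ideal $(z^2-p)$ has residue field $\Q_p(\sqrt p)$ while $\pi(x_0)$ has residue field $\Q_p$, so every affinoid neighborhood of $x_0$ maps onto its image by a degree-two finite morphism, and $(\pi^{\an}|_W)^{-1}$ does not exist. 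To repair this you must instead control the \emph{image} $\pi^{\an}(W)$ and show it is an admissible open containing $\Lbar$-points, which is exactly the key input in the paper: by \cite[Prop 8.1.2(2)]{FvdP}, the image of a rational subdomain under a finite morphism of affinoids is a finite union of rational subdomains, and each of these contains an $\Lbar$-point. The paper's proof applies this directly to a finite surjective morphism $p:\overline X\to\P^d$ obtained from coordinate projections on a projective compactification, without ever needing a local inverse.

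A second, more minor issue is your reduction to the smooth locus followed by a generically \'etale Noether normalization. Over an imperfect $\L$ (the lemma is stated for an arbitrary non-Archimedean field, not just $\L=\C(t)$), a reduced $\L$-variety need not be geometrically reduced, and then $X_{\sm}$ can be empty and no finite morphism to $\A^d$ is generically \'etale. The paper sidesteps this by using only a finite \emph{surjective} morphism (never smoothness or separability), and then observing that the algebraic fiber of $p$ over an $\Lbar$-point consists of $\Lbar$-points because $\Lbar$ is algebraically closed; that gives the desired point of $U$ without any étale local structure. Your Krasner-lemma argument for density of $\Lbar$ in $\Lhb$ is correct and is exactly the final ingredient the paper uses.
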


\begin{proof}
By shrinking $X$ and $U$, we may suppose that $X$ is affine and $U$ is rational.  Since we are only concerned with finding $\Lbar$-points on $X$, we may replace $X$ by the reduction of one of its components.  Let $\overline{X}$ denote the closure of $X$ in $\P^n\supset\A^n$.    Shrink $U$ to ensure that it is disjoint from $\overline{X}\setminus X$.
 By considering coordinate projections, we can produce a finite surjective morphism $p:\overline{X}\rightarrow \P^d$ as in, for example \cite[2.11]{dJ}.    Pick a standard affine $\A^d\subset \P^d$ such that its analytification intersects $p(U)$, and further shrink $X$ such that $p$ maps $X$ to $\A^d$.  By shrinking $U$, we may suppose that $p$ maps $U$ into the unit polydisc 
 \[B_1^d=\{x_1,\dots,x_n|\ |x_1|,\dots,|x_n|\leq 1\}\]
  of $(\A^d)^{\an}$.  Let $V=p^{-1}(B_1^d)$.  Then $p:V\rightarrow B_1^d$ is a finite surjective map of affinoids.  By \cite[Prop 8.1.2(2)]{FvdP}, $p(U)$ is a finite union of rational opens of $B_1^d$.  Each rational open must contain a $\Lbar$-point.  A preimage of such a point in $U$ is the desired point.
 \end{proof}

This yields the proof of Theorem \ref{t:approxprinciple}:

\begin{proof}
If $(\cd,m)$ is realizable by $Y\subset X(\Delta)_{\Lhat'}$, then $\BR_{\Delta,\cd,m}$, as constructed 
above, is non-empty. It is an admissible open in a finite union of components of 
$\Hilb(X(\Delta))^{\an}$. It must contain a $\Lbar$-point, $[Y']$ by the above lemma. Then $Y'$, considered as a subscheme of $X(\Delta)$ is defined over $\Lbar$.  The subscheme $Y'^\circ=Y\cap T$ has tropicalization $(\cd,m)$.

If $Y^\circ$ is sch\"on, apply the above argument to the admissible neighborhood constructed by Theorem \ref{t:schon}.
\end{proof}

\bibliographystyle{plain}

\end{document}